\theoremstyle{plain}
\newtheorem{thm}{Theorem}[section]
\newtheorem{cor}{Corollary}[section]
\newtheorem{lem}{Lemma}[section]
\newtheorem{prop}{Proposition}[section]
\theoremstyle{remark}
\theoremstyle{definition}
\title{Szeg\"{o} kernels and asymptotic expansions\\ for Legendre polynomials}
\author{Roberto Paoletti\footnote{\noindent{\bf Address:}
Dipartimento di Matematica e Applicazioni, Universit\`a degli Studi
di Milano Bicocca, Via R. Cozzi 55, 20125 Milano,
Italy; {\bf e-mail}: roberto.paoletti@unimib.it }}
\date{}
\begin{document}
\maketitle

\begin{abstract}
We present a geometric approach to the asymptotics of the Legendre polynomials
$P_{k,n+1}$, based on the Szeg\"{o} kernel of the Fermat quadric hypersurface,
and leading to complete asymptotic expansions holding on expanding subintervals of $[-1,1]$.
\end{abstract}

\section{Introduction}
The goal of this paper is to develop a geometric approach to the asymptotics of the Legendre polynomial 
$P_{k,n+1}(t)$ for $k\rightarrow +\infty$, with $t=\cos(\vartheta)\in [-1,1]$ and $n\ge 1$ fixed;
as is well-known, $P_{k,n+1}(t)$ is the restriction to $S^n$ of the Legendre harmonic, expressed in polar coordinates
on the sphere. We follow here the terminology of \cite{m0}, \cite{m1} and \cite{ah}.

There is a tight relation between $P_{k,n+1}(t)$ and the orthogonal
projector $$\mathcal{P}_{k,n}:L^2(S^n)\rightarrow V_{k,n},$$ 
where $V_{k,n}$ is the space of level-$k$ spherical
harmonics on $S^{n}$; equivalently, $V_{k,n}$ is the eigenspace of the (positive) Laplace-Beltrami
operator on functions on $S^n$, corresponding to its $k$-th eigenvalue 
$\lambda_{k,n}=k\,(k+n-1)$. 

Namely, for any choice of an orthonormal basis $\big(\varrho_{knj}\big)_{j=1}^{N_{k,n}}$ of $V_{k,n}$
the distributional kernel $\mathcal{P}_{k,n}(\cdot,\cdot)\in \mathcal{C}^\infty\left(S^n\times S^n\right)$ satisfies
\begin{equation}
\label{eqn:projector}
\mathcal{P}_{k,n}\left(\mathbf{q},\mathbf{q}'\right)=\sum_{j=1}^{N_{k,n}}\varrho_{knj}(\mathbf{q})\cdot 
\overline{\varrho_{knj}(\mathbf{q}')},
\end{equation}
where $\mathbf{q}\cdot \mathbf{q}'=\mathbf{q}^t\,\mathbf{q}'$ (we think of $\mathbf{q}$ and
$\mathbf{q}'$ as columns vectors), and $N_{k,n}$ is the dimension of $V_{k,n}$.
By symmetry considerations, $\mathcal{P}_{k,n}\left(\mathbf{q},\mathbf{q}'\right)$
only depends on $\mathbf{q}\cdot \mathbf{q}'$. In fact, with the normalization $P_{k,n+1}(1)=1$,
\begin{equation}
 \label{eqn:projector and legendre}
\mathcal{P}_{k,n}\left(\mathbf{q},\mathbf{q}'\right)=
\frac{N_{k,n}}{\mathrm{vol}(S^n)}\,P_{k,n+1}(\mathbf{q}\cdot \mathbf{q}').
\end{equation}
Thus it equivalent to give asymptotic expansions for $P_{k,n+1}\big(\cos(\vartheta)\big)$
and for $\mathcal{P}_{k,n}\left(\mathbf{q},\mathbf{q}'\right)$ with $\mathbf{q}\cdot \mathbf{q}'=\cos(\vartheta)$.

Since for any $(\mathbf{q},\mathbf{q}')\in S^n\times S^n$ we have
$$
\mathcal{P}_{k,n}\left(\mathbf{q},\mathbf{q}\right)=
\frac{N_{k,n}}{\mathrm{vol}(S^n)},\,\,\,\,\,
\mathcal{P}_{k,n}\left(\mathbf{q},-\mathbf{q}'\right)=
(-1)^k\,\mathcal{P}_{k,n}\left(\mathbf{q},\mathbf{q}'\right),
$$
we may assume $\mathbf{q}\neq \pm \mathbf{q}'$. Then 
there is a unique great circle parametrized by arc length going from $\mathbf{q}$ to $\mathbf{q}'$
in a time $\vartheta\in (0,\pi)$, and $\mathbf{q}^t\,\mathbf{q}'=\cos(\vartheta)$.

Our geometric approach 
uses on the one hand the specific relation 
between spherical harmonics on $S^n$ and the Hardy space of the Fermat quadric hypersurface in $\mathbb{P}^n$ 
(\cite{l}, \cite{g}), and the other hand the off-diagonal scaling asymptotics 
of the level-$k$ Szeg\"{o} kernel of polarized projective manifold (\cite{bsz}, \cite{sz}).

The following asymptotic expansions involve a sequence of constants $C_{k,n}>0$ with a precise geometric meaning
\cite{g}. 
There is a natural conformally unitary
isomorphism between the level-$k$ Szeg\"{o} kernel of the Fermat quadric $F_n\subset \mathbb{P}^n$ and 
$V_{k,n}$, given by a push-forward operation, and $C_{k,n}$ is the corresponding
conformal factor. 

An asymptotic expansion for $C_{k,n}$ is discussed in \cite{g}, building on the theory of \cite{l};
an alternative derivation is given in Proposition \ref{prop:espansione per Cnk}
(with an explicit computation of the leading order term). 

In the following, the symbol $\sim$ stands for \lq has the same asymptotics as\rq.

\begin{thm}
 \label{thm:main expansion}
There exists smooth functions $A_{nl}$ and $B_{nl}$ ($l=1,2,\ldots$) 
on $[0,\pi]$ such that the following holds. 
Let us fix $C>0$ and $\delta\in [0,1/6)$. Then, uniformly in $(\mathbf{q},\mathbf{q}')\in S^n\times S^n$
satisfying $\mathbf{q}^t\,\mathbf{q}'=\cos(\vartheta)$ with
$$
C\,k^{-\delta}<\vartheta<\pi-C\,k^{-\delta},
$$
we have for $k\rightarrow+\infty$ an asymptotic expansion of the form
\begin{eqnarray*}
 \lefteqn{\mathcal{P}_{k,n}(\mathbf{q},\mathbf{q}')}\\
&=& \frac{2^{\frac{n}{2}}}{C_{k,n}^2}\,\left(\frac{1}{\sin(\vartheta)}\right)^{(n-1)/2}\cdot\left[\cos\big(\alpha_{k,n} (\vartheta)\big)\cdot \mathcal{A}_n(\vartheta,k)
+\sin\big(\alpha_{k,n} (\vartheta)\big)\cdot \mathcal{B}_n(\vartheta,k)\right],
\end{eqnarray*}
where 
$$
\alpha_{k,n} (\vartheta)=:k\vartheta+\left(\frac{\vartheta}{2}-\frac{\pi}{4}\right)(n-1),
$$
and
$$
\mathcal{A}_n(\vartheta,k)\sim 1+\sum_{l=1}^{+\infty}k^{-l}\,\frac{A_{nl}(\vartheta)}{\sin(\vartheta)^{6l}},\,\,\,\,\,\,
\mathcal{B}_n(\vartheta,k)\sim \sum_{l=1}^{+\infty}k^{-l}\,\frac{B_{nl}(\vartheta)}{\sin(\vartheta)^{6l}}.
$$
\end{thm}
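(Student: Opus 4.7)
The proof strategy is laid out in the introduction: one expresses $\mathcal{P}_{k,n}(\mathbf{q},\mathbf{q}')$ as a push-forward of the Szeg\"{o} kernel $\Pi_k$ of the Fermat quadric $F_n\subset \mathbb{P}^n$, then inserts the off-diagonal scaling expansion of $\Pi_k$ from \cite{bsz} and \cite{sz}. The first step is to invoke the conformally unitary isomorphism between the level-$k$ Hardy space of $F_n$ and $V_{k,n}$ of \cite{g}, with conformal factor $C_{k,n}$: realizing it as a push-forward along the fibers of a natural equivariant projection from the unit circle bundle $X_n$ of $F_n$ down to $S^n$, and composing with its adjoint, one obtains an expression for $C_{k,n}^2\,\mathcal{P}_{k,n}(\mathbf{q},\mathbf{q}')$ as an integral of $\Pi_k$ against the product of the two fiber measures.

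The second step is to identify the geometric positions that govern this integral asymptotically. Each $\mathbf{q}\in S^n$ admits a natural family of lifts $[\mathbf{q}+i\mathbf{e}]\in F_n$ with $\mathbf{e}\perp \mathbf{q}$ and $|\mathbf{e}|=1$ (the equation of $F_n$ is then satisfied because $\sum(q_j+ie_j)^2=|\mathbf{q}|^2-|\mathbf{e}|^2+2i\,\mathbf{q}\cdot\mathbf{e}=0$). The unique great circle from $\mathbf{q}$ to $\mathbf{q}'$ with $\mathbf{q}\cdot\mathbf{q}'=\cos\vartheta$ picks out preferred tangent directions $\mathbf{e}_\mathbf{q}$ and $\mathbf{e}_{\mathbf{q}'}$; the two orientations of the circle yield two ``matching'' pairs of lifts in $X_n\times X_n$ that dominate the stationary-phase analysis. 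Writing their common contribution as $e^{i\alpha_{k,n}(\vartheta)}f(\vartheta,k)$ with $f$ complex, the conjugate pair contributes $e^{-i\alpha_{k,n}(\vartheta)}\overline{f(\vartheta,k)}$, and the sum is $\cos\alpha_{k,n}(\vartheta)\cdot 2\operatorname{Re}f-\sin\alpha_{k,n}(\vartheta)\cdot 2\operatorname{Im}f$, which is the split $\mathcal{A}_n,\mathcal{B}_n$ of the statement.

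The third step is to apply the Shiffman-Zelditch off-diagonal asymptotics to each matching pair. One has an expansion $\Pi_k\sim k^{n-1}\,e^{k\Psi}\sum_{j\geq 0}k^{-j}b_j$, for a complex phase $\Psi$ and amplitudes $b_j$ built from the Szeg\"{o} parametrix of Boutet de Monvel-Sj\"{o}strand. At the preferred lifts, $\Psi$ turns out to be purely imaginary and linear in $\vartheta$, so $k\Psi$ produces the main phase $ik\vartheta$; the subleading phase shift $(\vartheta/2-\pi/4)(n-1)$ comes from the constant phase of the principal amplitude $b_0$ together with the Maslov-type $\pi/4$ contribution from the stationary-phase integration in the fiber. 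The modulus of $b_0$ combined with the stationary-phase Jacobian yields the leading amplitude $(1/\sin\vartheta)^{(n-1)/2}$, and a volume comparison between $F_n$ and $S^n$ produces the $2^{n/2}$ prefactor.

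The main obstacle is to control the expansion in the extended range $\vartheta>Ck^{-\delta}$ with $\delta<1/6$: the Shiffman-Zelditch expansion is cleanest in the near-diagonal scaling regime $\vartheta\sim k^{-1/2}$, and a substantial extension is needed here. Along the preferred real directions singled out by $S^n$ the phase $\Psi$ stays purely imaginary, so there is no Gaussian decay to manage; however, each successive amplitude $b_j$ blows up like $\sin(\vartheta)^{-6j}$ as the stationary point moves off diagonal, which is precisely the source of the denominators $\sin(\vartheta)^{6l}$ in the statement and of the threshold $\delta<1/6$: the formal series remains asymptotic exactly when $k\sin(\vartheta)^6\to +\infty$, i.e.\ when $\delta<1/6$. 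Making this growth rate quantitative via explicit control of the derivatives of the symbols in the Szeg\"{o} parametrix, and combining it with a careful stationary-phase argument in the fiber integration, is the core technical content of the proof.
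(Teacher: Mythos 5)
Your overall architecture coincides with the paper's: the push-forward formula for $C_{k,n}^{2}\,\mathcal{P}_{k,n}$ as a double fiber integral of $\Pi_{\sqrt2,k}$, localization at the two pairs of lifts determined by the oriented great circle, the conjugate-pair symmetry producing the $\cos\alpha_{k,n}/\sin\alpha_{k,n}$ split, a Gaussian reduction giving the amplitude $\sin(\vartheta)^{-(n-1)/2}$ and the phase shift, and the identification of the $\sin(\vartheta)^{-6l}$ denominators with the threshold $\delta<1/6$. However, two points in your step 3 and in your closing paragraph do not hold up as stated. You invoke an ``off-diagonal'' expansion $\Pi_k\sim k^{n-1}e^{k\Psi}\sum_j k^{-j}b_j$ with $\Psi$ purely imaginary along the preferred directions, and you present the extension of the Shiffman--Zelditch expansion beyond the near-diagonal regime as the core difficulty. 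No such off-diagonal expansion is available (away from the diagonal in $F_n$ the kernel is $O(k^{-\infty})$), but none is needed: the two preferred lifts satisfy $\mathbf{q}_1+i\,\mathbf{p}_1=e^{-i\vartheta}(\mathbf{q}_0+i\,\mathbf{p}_0)$, i.e.\ they lie on the \emph{same} $S^1$-fiber over $F_n$, so the entire main phase $e^{ik\vartheta}$ is extracted exactly from the $S^1$-equivariance of the degree-$k$ component; after that, both arguments are within $O(k^{\epsilon-1/2})$ of the single center $\mathbf{z}_0/\sqrt2$, and the standard near-diagonal scaling asymptotics (Theorem \ref{thm:sz expansion szego equiv}, in Heisenberg coordinates, Lemma \ref{lem:algebraic sum and HLC}) applies verbatim. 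The $\sin(\vartheta)^{-6l}$ factors then come not from a growth of the Szeg\"o amplitudes but from the $\vartheta$-dependent linear change of variables in the transverse Gaussian integral applied to polynomials of degree $\le 6l$; likewise the phase shift $(\vartheta/2-\pi/4)(n-1)$ is the argument of $\det\big((1+i\cot\vartheta)I_{n-1}\big)^{-1/2}$, not a phase of the principal amplitude plus a Maslov correction.

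The genuine gap is that you treat the localization at the two matching pairs as automatic (``dominate the stationary-phase analysis''), whereas this is precisely what must be proved uniformly in the degenerating range $C\,k^{-\delta}<\vartheta<\pi-C\,k^{-\delta}$. One has to show that if $(\mathbf{p},\mathbf{p}')$ lies outside neighborhoods of radius $O(k^{\epsilon-1/2})$ of $(\pm\mathbf{p}_0,\pm\mathbf{p}_1)$, then the projections to $F_n$ of the two lifts remain at distance $\ge D\,k^{\epsilon_1-1/2}$, so that the rapid-decay estimate (Theorem \ref{thm:rapid decrease}) can be applied; when $\sin\vartheta$ is as small as $k^{-\delta}$ this requires a quantitative lower bound on $\big\|e^{-i\gamma}(\mathbf{q}_0+i\,\mathbf{p})-(\mathbf{q}_1+i\,\mathbf{p}')\big\|$ over all $\gamma\in[0,2\pi]$, splitting into the regimes where $\gamma$ is, or is not, close to $0,\pi,2\pi$ and using $\sin\vartheta\gtrsim k^{-\delta}$ in the second regime. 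This is the content of Proposition \ref{prop:shrinking delta} and is a substantial part of the proof; without it (or an equivalent argument) your expansion is only justified for $\vartheta$ in a fixed compact subset of $(0,\pi)$, not on the expanding intervals that the theorem asserts, and the exponent $\epsilon$ entering the cut-offs could not be chosen compatibly with the range $\|\mathbf{v}_j\|\le C\,k^{\epsilon}$, $\epsilon<1/6$, required by the scaling asymptotics. Also, a complete argument should note that of the four cut-off products only the two ``matching'' ones survive, the mixed ones being negligible because $[\mathbf{q}_0+i\,\mathbf{p}_0]$ and $[\mathbf{q}_1-i\,\mathbf{p}_1]$ stay at bounded distance in $F_n$.
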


At the $l$-th step, we have for some constant $C_l>0$
$$
\left|A_{nl}(\vartheta)/\sin(\vartheta)^{6l}\right|,\,
\left|B_{nl}(\vartheta)/\sin(\vartheta)^{6l}\right|
\le C_l\,k^{-l(1-6\delta)},
$$
and a similar estimate holds for the error term.
Hence the previous is an asymptotic expansion for $\delta\in [0,1/6)$.

As mentioned, the same techniques yield an asymptotic expansion for $C_{n,k}$ (see (6.18) in \cite{g}).

\begin{prop}
\label{prop:espansione per Cnk}
For $k\rightarrow +\infty$ we have an asymptotic expansion 
of the form:
\begin{eqnarray*}
C_{k,n}&\sim& 
\left[\frac{(n-1)!}{2\sqrt{2}}\cdot\mathrm{vol}(S^n)\,\mathrm{vol}\left(S^{n-1}\right)\right]^{1/2}
\,(\pi\,k)^{-(n-1)/4}\\
&&\cdot \left[1+\sum_{j\ge 1}k^{-j}\,a_j\right].
\end{eqnarray*}
\end{prop}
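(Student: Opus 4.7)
My plan is to express $C_{k,n}^2$ as a trace of the level-$k$ Szeg\"o kernel $\Pi_k^{F_n}$ of the Fermat quadric, and then extract its asymptotic expansion by combining the microlocal description of $\Pi_k^{F_n}$ in \cite{bsz}, \cite{sz} with stationary phase. Let $U:H_k^{F_n}\to V_{k,n}$ be the push-forward defining the conformal isomorphism, so that $U^*U=C_{k,n}^{2}\cdot\mathrm{id}$ on $H_k^{F_n}$. Taking the trace,
\[
C_{k,n}^{2}\,=\,\frac{1}{N_{k,n}}\,\mathrm{tr}(U^{*}U)\,=\,\frac{1}{N_{k,n}}\sum_{j=1}^{N_{k,n}}\|U\psi_{j}\|_{L^{2}(S^{n})}^{2}
\]
for any orthonormal basis $(\psi_j)$ of $H_k^{F_n}$. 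Unpacking $U$ as integration over the fibers of the natural map between the relevant circle bundles, this identifies $C_{k,n}^{2}\,N_{k,n}$ with the integral of $\Pi_k^{F_n}$ over a real submanifold $\Sigma$ of $X_{F_n}\times X_{F_n}$ lying over the diagonal of $S^n\times S^n$.

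Next, I would invoke the Boutet de Monvel--Sj\"ostrand parametrix to write $\Pi_k^{F_n}$ as an oscillatory integral with complex phase of positive type and amplitude admitting a complete asymptotic expansion in $k^{-1}$, as used in \cite{sz}. Substituting into the integral of the first step and applying stationary phase over $\Sigma$, the critical set turns out to be $(n-1)$-dimensional and encodes the real embedding of $S^n$ into $F_n$. This contributes a factor of $k^{-(n-1)/2}$ from stationary phase, which combined with the leading Szeg\"o density ($\sim k^{n-1}/\pi^{n-1}$, since $\dim_{\mathbb{C}}F_n=n-1$) and with the $k^{n-1}$ appearing in $N_{k,n}\sim 2k^{n-1}/(n-1)!$ yields
\[
C_{k,n}^{2}\,\sim\, c\,(\pi k)^{-(n-1)/2}\bigg(1+\sum_{j\ge 1}b_{j}\,k^{-j}\bigg);
\]
extracting the square root gives the expansion in the statement.

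What remains is to match the constant $c$ with $\tfrac{(n-1)!}{2\sqrt{2}}\,\mathrm{vol}(S^n)\,\mathrm{vol}(S^{n-1})$. The factor $\mathrm{vol}(S^n)$ arises from the outer $S^n$-integration in $\mathrm{tr}(U^*U)$, the factor $\mathrm{vol}(S^{n-1})$ from integrating along the fibre of $\Sigma\to\mathrm{diag}(S^n)$ through a stationary point, and the remaining numerical factor from the Hessian determinant of the phase transverse to the critical set together with the normalisation of the leading Szeg\"o amplitude. This final computation is the main obstacle: it requires the complex structure on $F_n$, the real embedding coming from $S^n$, and the K\"ahler normalisation of the Szeg\"o kernel to be coordinated carefully so that the constant emerges in the precise form displayed. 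Higher-order coefficients $a_j$ are then produced mechanically by continuing the stationary-phase expansion and the Szeg\"o amplitude expansion to all orders.
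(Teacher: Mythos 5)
Your starting identity is essentially the paper's, just in integrated form: since $U^*U=C_{k,n}^2\,\mathrm{id}$, the relation
$C_{k,n}^2\,N_{k,n}=\mathrm{tr}(U^*U)=\int_{S^n}\int_{S(\mathbf{q}^\perp)}\int_{S(\mathbf{q}^\perp)}\Pi_{\sqrt{2},k}(\mathbf{q}+i\mathbf{p},\mathbf{q}+i\mathbf{p}')\,\mathrm{d}V(\mathbf{p})\,\mathrm{d}V(\mathbf{p}')\,\mathrm{d}\mathbf{q}$
is the $\mathbf{q}$-integrated version of the paper's comparison of the two evaluations of the constant function $\mathcal{P}_{k,n}(\mathbf{q},\mathbf{q})$ (namely $N_{k,n}/\mathrm{vol}(S^n)$ versus the push-forward of $\Pi_{\sqrt{2},k}$ over the fibers), and evaluating the fiber integral by Szeg\"o scaling asymptotics plus a Gaussian integration over the $n-1$ transverse directions is exactly the mechanism the paper uses; so the form $C_{k,n}^2\sim c\,(\pi k)^{-(n-1)/2}\bigl(1+\sum_j b_j k^{-j}\bigr)$ does come out this way. (A minor slip: the critical manifold $\{\mathbf{p}'=\mathbf{p}\}$ has dimension $2n-1$, not $n-1$; only the transverse directions number $n-1$, though your attribution of $\mathrm{vol}(S^n)$ to the base and $\mathrm{vol}(S^{n-1})$ to the fiber shows you intend the right picture.)

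The genuine gap is the one you flag yourself: the constant, which is the actual content of the Proposition beyond the shape of the expansion, is never computed. The value $\bigl[\tfrac{(n-1)!}{2\sqrt{2}}\,\mathrm{vol}(S^n)\,\mathrm{vol}(S^{n-1})\bigr]^{1/2}$ requires two normalisation inputs that your outline does not identify: (i) the comparison of the Szeg\"o kernels at radius $\sqrt{2}$ and radius $1$, namely $\Pi_{\sqrt{2},k}(\sqrt{2}\,x,\sqrt{2}\,x')=\tfrac{\sqrt{2}}{2^n}\,\Pi_{1,k}(x,x')$, which rests on the homogeneity of the degree-$k$ functions together with the identification $\mathrm{d}V_{X_1}=\tfrac{1}{2\pi}\,\mathrm{d}'V_{X_1}$ of the two natural densities (the paper's Lemmas on the metrics and on orthonormal bases on $X_r$); and (ii) the fact that the ambient displacement $\mathbf{z}+\tfrac{i}{\sqrt{k}}A_k(\mathbf{v})$ corresponds, in Heisenberg coordinates centered at $\mathbf{z}/\sqrt{2}\in X_1$, to the tangent vector $\approx i\,\mathbf{v}/\sqrt{2}$ (the Lemma relating algebraic sums in $\mathbb{C}^{n+1}$ to HLC), so that the transverse Gaussian is $e^{-\|\mathbf{v}\|^2/4}$ rather than $e^{-\|\mathbf{v}\|^2/2}$. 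With these, $\int_{\mathbb{R}^{n-1}}e^{-\|\mathbf{v}\|^2/4}\,\mathrm{d}\mathbf{v}=(4\pi)^{(n-1)/2}$ combined with $\tfrac{\sqrt{2}}{2^n}$ gives the fiber integral $\tfrac{1}{\sqrt{2}}(k/\pi)^{(n-1)/2}$, and only then does comparison with $N_{k,n}=\tfrac{2k^{n-1}}{(n-1)!}+O(k^{n-2})$ produce the stated coefficient. Your "Hessian determinant plus K\"ahler normalisation" step is precisely where these factors live; leaving it as "the main obstacle" means the Proposition as stated (with its explicit constant) is not proved by the proposal, even though the strategy is the right one.
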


If we insert the latter expansion in the one provided by Theorem \ref{thm:main expansion}, we obtain the following:

\begin{cor}
 \label{cor:explicit expansion}
 With the assumptions and notation of Theorem \ref{thm:main expansion}, for $k\rightarrow+\infty$ there is an asymptotic expansion
 \begin{eqnarray*}
 \mathcal{P}_{k,n}(\mathbf{q},\mathbf{q}')&=& \frac{2^{\frac{n+3}{2}}}{(n-1)!}\,
 \frac{1}{\mathrm{vol}\left(S^n\right)\,\mathrm{vol}\left(S^{n-1}\right)}\,\left(\frac{\pi\,k}{\sin(\vartheta)}\right)^{(n-1)/2}\\
&&\cdot\Big[\cos\big(\alpha_{k,n} (\vartheta)\big)\cdot \mathcal{C}_n(\vartheta,k)
+\sin\big(\alpha_{k,n} (\vartheta)\big)\cdot \mathcal{D}_n(\vartheta,k)\Big],
\end{eqnarray*}
where $\mathcal{C}_n(\vartheta,k)$ and $\mathcal{D}_n(\vartheta,k)$ admit asymptotic expansions similar to those of 
$\mathcal{A}_n(\vartheta,k)$ and $\mathcal{B}_n(\vartheta,k)$, respectively (of course, with different functions $C_{nl}$ and 
$D_{nl}$, $l\ge 1$).
\end{cor}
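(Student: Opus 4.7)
The plan is to prove the corollary by direct substitution, feeding the asymptotic expansion of $C_{k,n}$ from Proposition \ref{prop:espansione per Cnk} into the prefactor $2^{n/2}/C_{k,n}^{2}$ in Theorem \ref{thm:main expansion}. Squaring Proposition \ref{prop:espansione per Cnk}'s expansion and formally inverting the resulting unit power series in $k^{-1}$, I would obtain
\[
\frac{2^{n/2}}{C_{k,n}^{2}}\sim \frac{2^{(n+3)/2}}{(n-1)!\,\mathrm{vol}(S^{n})\,\mathrm{vol}(S^{n-1})}\,(\pi k)^{(n-1)/2}\cdot\Bigl[1+\sum_{j\ge 1}b_{j}\,k^{-j}\Bigr],
\]
for constants $b_{j}$ recursively computable from the coefficients of Proposition \ref{prop:espansione per Cnk}, having simplified $2^{n/2}\cdot 2\sqrt{2}=2^{(n+3)/2}$ to match the constant in the corollary's statement.

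Next, I would insert this expression into Theorem \ref{thm:main expansion}: the factor $(\pi k)^{(n-1)/2}$ combines with $(\sin\vartheta)^{-(n-1)/2}$ to produce the claimed power $(\pi k/\sin\vartheta)^{(n-1)/2}$, and the $\vartheta$-independent series $1+\sum_{j\ge 1}b_{j}k^{-j}$ multiplies separately into $\mathcal{A}_{n}$ and $\mathcal{B}_{n}$ by the Cauchy product, producing the new series $\mathcal{C}_{n}$ and $\mathcal{D}_{n}$. The coefficient of $k^{-l}$ in $\mathcal{C}_{n}$ is then a finite linear combination of the $A_{n,l-j}(\vartheta)/\sin(\vartheta)^{6(l-j)}$ for $0\le j\le l$, and bringing to the common denominator $\sin(\vartheta)^{6l}$ presents it in the required form $C_{nl}(\vartheta)/\sin(\vartheta)^{6l}$ with $C_{nl}(\vartheta)=\sum_{j=0}^{l}b_{j}A_{n,l-j}(\vartheta)\sin(\vartheta)^{6j}$ smooth on $[0,\pi]$; $\mathcal{D}_{n}$ is handled analogously.

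There is essentially no genuine obstacle; the only point worth checking is that the asymptotic character is preserved on the admissible range $Ck^{-\delta}<\vartheta<\pi-Ck^{-\delta}$, i.e., that the bound $O(k^{-l(1-6\delta)})$ stated after Theorem \ref{thm:main expansion} survives the substitution. This is automatic, because the inserted series has constant (in $\vartheta$) coefficients and leading term $1$, so multiplication preserves the decay rate of each truncation remainder uniformly in $(\mathbf{q},\mathbf{q}')$.
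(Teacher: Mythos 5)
Your proposal is correct and follows the same route as the paper, which obtains the corollary precisely by squaring and inverting the expansion of Proposition \ref{prop:espansione per Cnk} and inserting it into Theorem \ref{thm:main expansion}; the constant bookkeeping $2^{n/2}\cdot 2\sqrt{2}=2^{(n+3)/2}$ and the absorption of the $\vartheta$-independent series into $\mathcal{C}_n$, $\mathcal{D}_n$ are exactly as intended. Your remark that the coefficients $b_j$ are constant in $\vartheta$, so the $O\bigl(k^{-l(1-6\delta)}\bigr)$ bounds survive on the range $C\,k^{-\delta}<\vartheta<\pi-C\,k^{-\delta}$, correctly handles the only point needing care.
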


Pairing Corollary \ref{cor:explicit expansion} with (\ref{eqn:projector and legendre}), we obtain:

\begin{cor}
 \label{cor:expansion legendre polynomials}
 In the same situation as in  Theorem \ref{thm:main expansion}, for $k\rightarrow+\infty$ there is an asymptotic expansion
 \begin{eqnarray*}
 P_{k,n+1}\big(\cos(\vartheta)\big)&=& \frac{2^{\frac{n+1}{2}}}{\mathrm{vol}\left(S^{n-1}\right)}\,
 \left(\frac{\pi}{\sin(\vartheta)\,k}\right)^{(n-1)/2}\\
&&\cdot\Big[\cos\big(\alpha_{k,n} (\vartheta)\big)\cdot \mathcal{E}_n(\vartheta,k)
+\sin\big(\alpha_{k,n} (\vartheta)\big)\cdot \mathcal{F}_n(\vartheta,k)\Big],
\end{eqnarray*}
where again $\mathcal{E}_n(\vartheta,k)$ and $\mathcal{F}_n(\vartheta,k)$ admit asymptotic expansions similar to those of 
$\mathcal{A}_n(\vartheta,k)$ and $\mathcal{B}_n(\vartheta,k)$, respectively.
\end{cor}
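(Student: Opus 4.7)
The plan is to invert the relation (\ref{eqn:projector and legendre}) to obtain
$$P_{k,n+1}(\cos\vartheta) = \frac{\mathrm{vol}(S^n)}{N_{k,n}}\,\mathcal{P}_{k,n}(\mathbf{q},\mathbf{q}'),$$
and then substitute the expansion of Corollary~\ref{cor:explicit expansion}. Thus the whole task reduces to understanding the asymptotics of the scalar factor $\mathrm{vol}(S^n)/N_{k,n}$ and folding it into the series expansions of $\mathcal{C}_n$ and $\mathcal{D}_n$.

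For the first task I would invoke the classical dimension formula
$$N_{k,n} = \binom{k+n}{n} - \binom{k+n-2}{n} = \frac{(2k+n-1)\,(k+n-2)!}{k!\,(n-1)!},$$
which is a polynomial in $k$ of degree $n-1$ with leading coefficient $2/(n-1)!$. Writing $N_{k,n} = \tfrac{2 k^{n-1}}{(n-1)!}\bigl(1 + c_1 k^{-1} + c_2 k^{-2} + \cdots\bigr)$ and inverting the bracket as a formal power series in $k^{-1}$, one obtains
$$\frac{1}{N_{k,n}} \sim \frac{(n-1)!}{2\,k^{n-1}}\,\Bigl(1 + \sum_{j\ge 1} d_j\, k^{-j}\Bigr),$$
with $\vartheta$-independent constants $d_j$.

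Substituting into the inverted form of (\ref{eqn:projector and legendre}) the expansion of Corollary~\ref{cor:explicit expansion}, the factors $\mathrm{vol}(S^n)$ and $(n-1)!$ cancel, and the residual power $k^{-(n-1)}\cdot k^{(n-1)/2} = k^{-(n-1)/2}$ absorbs into the bracket $(\pi k /\sin\vartheta)^{(n-1)/2}$, turning it into $\bigl(\pi/(\sin\vartheta\cdot k)\bigr)^{(n-1)/2}$. A direct check confirms that the constant prefactor simplifies exactly to $2^{(n+1)/2}/\mathrm{vol}(S^{n-1})$, as claimed. I would then define $\mathcal{E}_n(\vartheta,k)$ and $\mathcal{F}_n(\vartheta,k)$ as the Cauchy products of $\mathcal{C}_n(\vartheta,k)$ and $\mathcal{D}_n(\vartheta,k)$ with the $\vartheta$-independent series $1 + \sum_{j\ge 1} d_j k^{-j}$; the structure of the asymptotic expansion is preserved because multiplying a term $k^{-l} C_{nl}(\vartheta)/\sin(\vartheta)^{6l}$ by a scalar power $k^{-j}$ merely raises the order $l$ without introducing new $\vartheta$-singularities. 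No genuine obstacle arises in this corollary; the only point deserving care is the bookkeeping of numerical prefactors to verify the claimed closed form.
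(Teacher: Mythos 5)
Your proposal is correct and follows exactly the route the paper intends: the paper obtains this corollary by pairing Corollary \ref{cor:explicit expansion} with the relation (\ref{eqn:projector and legendre}), i.e.\ dividing by $N_{k,n}/\mathrm{vol}(S^n)$ and using $N_{k,n}\sim 2k^{n-1}/(n-1)!$ (as in (\ref{eqn:Pk riemann roch})), which is precisely your computation. Your bookkeeping of the prefactor and the absorption of the $\vartheta$-independent series $1+\sum_j d_jk^{-j}$ into $\mathcal{E}_n$ and $\mathcal{F}_n$ is accurate.
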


Let us verify that Corollary \ref{cor:expansion legendre polynomials} fits with the classical asymptotics. 
For example, when $n=1$ we obtain  
$$
P_{k,2}\big(\cos(\vartheta)\big)\sim \cos(k\vartheta)+\cdots,
$$
so that the leading order term is the $k$-th Chebychev polynomial.
Since it is known that in this case the Legendre polynomial \textit{is} the
Chebychev polynomial (\cite{m0}, page 11), this is in fact the only term of the expansion.

For $n=2$, we obtain the formula of Laplace (cfr \cite{leb}, \S 4.6; \cite{o}, (8.01) of Ch. 4; \cite{s}, Theorem 8.21.2), but as
a full asymptotic expansion holding uniformly
on expanding subintervals converging to $[-1,1]$ at a controlled rate, as above:
$$
P_{k,3}\big(\cos(\vartheta)\big)\sim \sqrt{\frac{2}{\pi\,k\,\sin(\vartheta)}}\,
\cos\left(\left(k+\frac{1}{2}\right)\,\vartheta-\frac{\pi}{4}\right)+O\left(k^{-3/2+6\delta}\right).
$$

For arbitrary $n$, $P_{k,n+1}$ is a multiple of a Gegenbauer polynomial (\cite{b}; \cite{m0}, page 16):
\begin{equation}
 \label{eqn:gegenbauer}
P_k^{(n/2-1,n/2-1)}\big(\cos(\vartheta)\big)=r_{k,n}\,P_{k,n+1}\big(\cos(\vartheta)\big).
\end{equation}
Given the standardization for $P_k^{(n/2-1,n/2-1)}$ (\cite{b}, \S 10.8)
\begin{eqnarray*}
r_{k,n}&=&
P_k^{(n/2-1,n/2-1)}(1)={k+n/2-1\choose k}\\
&=&\frac{(n/2)_k}{k!}=\frac{\Gamma(k+n/2)}{k!\,\Gamma(n/2)},
\end{eqnarray*}
where $\Gamma$ is of course the Gamma function. By (35.31) in \cite{m1}, for $k\rightarrow+\infty$ we have
$$
\Gamma(k+n/2)\sim k^{n/2}\,\Gamma(k)=k^{n/2}\,(k-1)!.
$$
Therefore,
$$
r_{k,n}\sim \frac{k^{n/2}\,(k-1)!}{k!\,\Gamma(n/2)}=\frac{k^{n/2-1}}{\Gamma(n/2)}.
$$
If we use the well-known formula (see e.g.  (2) of \cite{m0})
$$
\mathrm{vol}\left(S^{n-1}\right)=\frac{2\,\pi^{n/2}}{\Gamma (n/2)},
$$
we obtain for $P_k^{(n/2-1,n/2-1)}\big(\cos(\vartheta)\big)$ as asymptotic expansion with leading order term
\begin{eqnarray*}
 \lefteqn{2^{\frac{n+1}{2}}\,\frac{k^{n/2-1}}{\Gamma(n/2)}\,\frac{\Gamma (n/2)}{2\,\pi^{n/2}}\,\,
 \left(\frac{\pi}{\sin(\vartheta)\,k}\right)^{(n-1)/2}\,\cos\big(\alpha_{k,n} (\vartheta)\big)}\\
%&=&2^{\frac{n-1}{2}}\,(\pi\,k)^{-1/2}\,\left(\frac{1}{2\,\cos(\vartheta/2)\sin(\vartheta/2)}\right)^{(n-1)/2}\,\cos\big(\alpha_{k,n} (\vartheta)\big)\\
&=&\frac{1}{\sqrt{\pi\,k}}\,\frac{1}{\cos(\vartheta/2)^{(n-1)/2}\sin(\vartheta/2)^{(n-1)/2}}\,\cos\big(\alpha_{k,n} (\vartheta)\big),
\end{eqnarray*}
in agreement with (10) on page 198 of \cite{b}.

\bigskip

\noindent
\textbf{Acknowledgments.} I am endebted to Leonardo Colzani and Stefano Meda for very valuable comments and insights.

\section{Preliminaries}

\subsection{The geometric picture}
For the following, see \cite{g}, \cite{l}.

Let $S^n_1\subset \mathbb{R}^{n+1}$ be the unit sphere, and let us identify the tangent and cotangent
bundles of $S^n_1$ by means of the standard Riemannian metric.
The unit (co)sphere bundles of $S^n_1$ is given by the incidence correspondence
\begin{equation}
 \label{eqn:unit_cosphere}
 S^*\left(S^n_1\right)\cong S\left(S^n_1\right)
 =\left\{(\mathbf{q},\mathbf{p})\in S^n_1\times S^n_1\,:\,\mathbf{q}^t\,\mathbf{p}=0\right\}.
 \end{equation}
 
The Fermat quadric hypersurface in complex projective space is
$$F_n=:\left\{[\mathbf{z}]\in  \mathbb{P}^n\,:\,\mathbf{z}^t\,\mathbf{z}=0\right\};$$ 
let $A$ be the restriction to $F_n$ of the hyperplane line bundle.
Given the standard Hermitian product on $\mathbb{C}^{n+1}$, $A$ is naturally a positive Hermitian line bundle, 
$F_n$ inherits a K\"{a}hler structure $\omega_{F_n}$
(the restriction of the Fubini-Study metric), 
and the spaces of global holomorphic sections of higher powers of $A$, $H^0\left(F_n,A^{\otimes k}\right)$, have an induced
hermitian structure.

The affine cone over $F_n$ is $\mathcal{C}_n=\{\mathbf{z}^t\,\mathbf{z}=0\}\subset \mathbb{C}^{n+1}$; 
the intersection
$
X_1=:\mathcal{C}_n\cap S_1^{2n+1}
$ may be viewed as the unit circle bundle in the dual 
line bundle $A^\vee$.
More generally, for any $r>0$ the intersection 
\begin{equation}
 \label{eqn:circle bundle radius r}
 X_r=:\mathcal{C}_n\cap S^{2n+1}_r
\end{equation}
with the sphere of radius $r$ is naturally identified with the circle bundle of radius $r$ in $A^\vee$.
In particular, 
\begin{eqnarray}
\label{eqn:circle bundle and cosphere bundle}
 X_{\sqrt{2}}=\left\{\mathbf{q}+i\,\mathbf{p}\,:\,\|\mathbf{q}\|^2=\|\mathbf{p}\|^2=1,\,\mathbf{q}^t\,\mathbf{p}=0\right\}
\end{eqnarray}
%Hence the map 
%$$\beta:(\mathbf{q},\mathbf{p})\in S^*(S^n)\mapsto\mathbf{q}+i\,\mathbf{p}\in X(\sqrt{2})$$ 
%is a diffeomorphism. %It is obviously equivariant for the natural actions of $O(n+1)$ on $S^n$ and $X_{\sqrt{2}}$,
is diffeomorphic to $S^*(S^n)$ by the map $\beta:(\mathbf{q},\mathbf{p})\mapsto\mathbf{q}+i\,\mathbf{p}$;
furthermore, $\beta$ is equivariant for the natural actions of $O(n+1)$ 
on $S^*(S^n)$ and $X_{\sqrt{2}}$
defined by, respectively, 
$$
B\cdot (\mathbf{q},\mathbf{p})=:(B\mathbf{q},B\mathbf{p}),\,\,\,\,B\cdot (\mathbf{q}+i\,\mathbf{p})=
B\mathbf{q}+i\,B\mathbf{p}\,\,\,\,\,\,\,\,\big(B\in O(n+1)\big).
$$

We shall identify $S^*(S^n)$ and $X_{\sqrt{2}}$, and denote the projection by 
\begin{equation}
 \label{eqn:nu projection}
 \nu:S^*(S^n)\cong X_{\sqrt{2}}\rightarrow S^n,\,\,\,\,\mathbf{q}+i\,\mathbf{p}\mapsto \mathbf{q}.
\end{equation}

There is also a standard structure action of $S^1$ on $X_{\sqrt{2}}$, induced by 
fibrewise scalar multiplication in $A^\vee$, or equivalently in $\mathbb{C}^{n+1}$. 
The latter action
is interwined by $\beta$ with the \lq reverse\rq\, geodesic flow on $S^*(S^n)\cong S(S^n)$. 
The $S^1$-orbits are the fibers of the circle bundle projection
\begin{equation}
 \label{eqn:geodesic flow and circle bundle}
\pi_{\sqrt{2}}:\mathbf{q}+i\,\mathbf{p}\in X_{\sqrt{2}}\mapsto [\mathbf{q}+i\,\mathbf{p}]\in F_n.
\end{equation}

This holds for any $r>0$;
we shall denote by $\pi_r:X_r\rightarrow F_n$ the projection for general $r>0$.
%For any $r>0$, the fibers of $\pi:X_r\rightarrow F_n$ are obviously still given by the orbits of the $S^1$-action.

%Thus $S^*(S^n)$ gets identified with the boundary of a strictly pseudoconvex domain; we shall
%use the same symbol $\nu$ to denote the projections $S^*(S^n)\rightarrow S^n$, 
%$(\mathbf{q},\mathbf{p})\mapsto \mathbf{q}$, and
%$X(\sqrt{2})\rightarrow S^n$, $\mathbf{q}+i\mathbf{p}\mapsto \mathbf{q}$. 
%Similarly, we shall denote by $\nu_2:X(\sqrt{2})\rightarrow S^n$
%the projection $(\mathbf{q},\mathbf{p})\mapsto \mathbf{p}$.\label{page:nuj}
%Let us note that
%$\nu_j$ is obviously $O(n+1)$-equivariant, and that therefore so is the push-forward operation ${\nu_j}_*$.

\subsection{The metric on $X_r$}

Let us dwell on the metric aspect of (\ref{eqn:circle bundle radius r}); there are
two natural choices of a Riemannian metric on $X_r$, hence of a Riemannian density, and we need to clarify
the relation between the two.

There is an obvious choice of a Riemannian metric $g_r'$ on  
$X_r$, induced by the standard Euclidean product on
$\mathbb{C}^{n+1}$. With respect to $g_r'$, the $S^1$ orbits on $X_r$ have length
$2\pi\,r$.
Clearly, $g'_r$ is homogeneous of degree $2$ with respect to the dilation
$\mu_r:x\in X\mapsto r\,x\in X_r$, and therefore the corresponding volume form $\Upsilon'_{X_r}$ on
$X_r$ is homogeneous of degree $\dim (X)=2n-1$. That is, 
\begin{equation}
 \label{eqn:homogeneity volume form}
 \mu_r^*(\Upsilon'_{X_r})=r^{2n-1}\,\Upsilon'_{X}.
\end{equation}

An alternative and common choice of a Riemannian structure $g_1$ on $X_1$ comes from its
structure of a unit circle bundle over $F_n$.
Let 
%$\omega_{F_n}$ be the restriction of the Fubini-Study form to $F_n\subseteq \mathbb{P}^n$, and
%let 
$\alpha\in \Omega^1(X_1)$ be the connection 1-form
associated to the unique compatible covariant derivative on $A$, so that $\mathrm{d}\alpha=2\,\pi_1^*(\omega_{F_n})$. 
Also, let 
\begin{equation}
 \label{eqn:horizontal and vertical tangent space}
H(X_1/F_n)=\ker (\alpha),\,\,\,
%\end{equation}
%and
%\begin{equation}
% \label{eqn:defn of vertical tangent space}
V(X_1/F_n)=\ker (\mathrm{d}\pi_1)\subseteq TX
\end{equation}
denote the horizontal and vertical tangent bundles for $\pi_1$,
respectively.
%; the vertical tangent bundle is spanned instead 
%by the generator of the standard $S^1$-action on $X$, which will be denoted $\partial_\theta$, or $\partial/\partial \theta$.
%
There is a unique Riemannian metric $g_1$ on $X_1$ such that $\pi_1$ a Riemannian submersion, and 
the $S^1$-orbits on $X_1$ have unit length.
The corresponding volume form on $X_1$ is given by
\begin{equation}
 \label{eqn:standard volume form}
\Upsilon_{X_1}=\frac{1}{(n-1)!}\,\pi_1^*\left(\omega_{F_n}^{\wedge (n-1)}\right)\,\wedge\,\frac{1}{2\pi}\,\alpha=
\frac{1}{2\pi}\,\pi_1^*(\Upsilon_{F_n})\wedge \alpha,
\end{equation}
where $\Upsilon_{F_n}=\omega_{F_n}^{\wedge (n-1)}/(n-1)!$ is the symplectic volume form on $F_n$.
%We shall denote by
%\begin{equation}
% \label{eqn:standard density on X Fn}
%\mathrm{d}V_X=|\Upsilon_{X}|,\,\,\,\,\mathrm{d}V_{F_n}=|\Upsilon_{F_n}|
%\end{equation}
%the corresponding densities on $X$ and $F_n$, respectively.

We wish to compare the two Riemannian metrics $g_1$ and $g_1'$, the corresponding volume
forms, $\Upsilon_{X_1}'$ and $\Upsilon_{X_1}$, and densities, $\mathrm{d}V_X$ and $\mathrm{d}'V_X$.
 
 \begin{lem}
  \label{lem:relation volume forms and densities}
$\Upsilon_{X_1}=\frac{1}{2\pi}\,\Upsilon_{X_1}'$ and $\mathrm{d}V_{X_1}=\frac{1}{2\pi}\,\mathrm{d}'V_{X_1}$.  
 \end{lem}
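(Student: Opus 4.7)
The plan is to reduce the lemma to a pointwise comparison of inner products using the canonical horizontal/vertical splitting, and then read off the effect on volume forms.

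First, I would compare the vertical and horizontal distributions of $\pi_1:X_1\to F_n$ with respect to both metrics. The vertical bundle $V(X_1/F_n)$ is spanned pointwise at $\mathbf{z}\in X_1$ by the generator of the $S^1$ action, namely $i\,\mathbf{z}\in T_{\mathbf{z}}X_1$. For $g_1'$ (the restriction of the Euclidean metric), this vector has norm $\|\mathbf{z}\|=1$, and a direct check shows that its Euclidean-orthogonal complement inside $T_{\mathbf{z}}X_1$ is $\{w\in\mathbb{C}^{n+1}:\bar{\mathbf{z}}^t w=0\}$, which is exactly the horizontal distribution $H(X_1/F_n)=\ker\alpha$ of the Chern connection on $A$. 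So the $g_1'$-orthogonal splitting $T_{\mathbf{z}}X_1=H\oplus V$ coincides with the $g_1$-orthogonal splitting (which is definitional for a Riemannian submersion with totally geodesic fibres).

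Next, I would compare the two metrics on each summand. On the horizontal part, both $g_1$ and $g_1'$ pull back the Fubini--Study metric $\omega_{F_n}$ from $F_n$: for $g_1$ this is the Riemannian submersion property, while for $g_1'$ this is the defining description of the Fubini--Study metric as the quotient of the round metric on $S^{2n+1}$ under the Hopf fibration, restricted to $F_n$. Hence $g_1|_H=g_1'|_H$. On the vertical part, $\|i\mathbf{z}\|_{g_1'}=1$ so that $S^1$-orbits have length $2\pi$, while the normalization of $g_1$ forces $\|i\mathbf{z}\|_{g_1}=\tfrac{1}{2\pi}$ (so that orbits have length $1$). Therefore $g_1'|_V=(2\pi)^2\,g_1|_V$.

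Putting these together, the ratio of the two Riemannian volume forms is $\sqrt{1\cdots 1\cdot(2\pi)^2}=2\pi$, so $\Upsilon_{X_1}'=2\pi\,\Upsilon_{X_1}$, which is the first claim. Equivalently, using that $\alpha(i\mathbf{z})=1$ so $\alpha$ is $g_1'$-dual to $i\mathbf{z}$, we can verify the identity $\Upsilon_{X_1}'=\pi_1^*(\Upsilon_{F_n})\wedge\alpha$, which combined with formula (\ref{eqn:standard volume form}) yields the same conclusion. The density statement $\mathrm{d}V_{X_1}=\tfrac{1}{2\pi}\mathrm{d}'V_{X_1}$ then follows since the Riemannian density is just $|\Upsilon_{X_1}|$.

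The only mildly delicate point — hardly an obstacle — is justifying that the horizontal distribution $\ker\alpha$ defined via the Chern connection on the hyperplane bundle $A$ truly coincides with the Euclidean-orthogonal complement of the vertical direction inside $T_{\mathbf{z}}X_1$. This is a standard fact for the tautological/hyperplane bundle under the standard Hermitian metric, and the short computation indicated above handles it.
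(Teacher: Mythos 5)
Your proof is correct and takes essentially the same route as the paper: establish that the horizontal--vertical splitting for $\pi_1$ is orthogonal for both $g_1$ and $g_1'$, that the two metrics agree on $H(X_1/F_n)$ (the Hopf/Fubini--Study submersion fact, which the paper checks by comparing the potentials $\|\mathbf{z}\|^2$ and $\ln\|\mathbf{z}\|^2$ on the unit sphere), and that vertical lengths differ by the factor $2\pi$, so the volume forms differ by $2\pi$. Your statement $g_1'|_V=(2\pi)^2\,g_1|_V$ is in fact the precise version of the paper's abbreviated claim on $V(X_1/F_n)$, and your closing remark identifying $\ker\alpha$ with the Euclidean orthocomplement of $i\mathbf{z}$ in $T_{\mathbf{z}}X_1$ is exactly the computation the paper performs via $\ker(\theta_{\mathbf{z}})=\mathbf{z}^{\perp_{h_0}}$.
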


\begin{proof}
 [Proof of Lemma \ref{lem:relation volume forms and densities}.]
The connection 1-form for the Hopf map $S^{2n+1}\rightarrow \mathbb{P}^n$ is 
$$
\theta=\frac{i}{2}\,\left(\mathbf{z}^t\,\mathrm{d}\overline{\mathbf{z}}^t-\overline{\mathbf{z}}^t\,\mathrm{d}\mathbf{z}\right);
$$
thus $\alpha$ is the restriction of $\theta$ to $X_1$. Let $\omega_0$ be the standard symplectic structure on
$\mathbb{C}^{n+1}$. 
Since $\theta_{\mathbf{z}}(\mathbf{w})=\omega_0(\mathbf{z},\mathbf{w})$, we have
$\ker(\theta_{\mathbf{z}})=\mathbf{z}^{\perp_{\omega_0}}$ (symplectic annihilator). 
In other words,
$$
 \ker(\theta_{\mathbf{z}})=\left(\mathrm{span}_\mathbb{R}(\mathbf{z})\oplus \mathbf{z}^{\perp_{ h_0}}\right)\cap T_{\mathbf{z}}S_1^{2n+1}=
\mathbf{z}^{\perp_{ h_0}},
$$
where $\mathbf{z}^{\perp_{ h_0}}$ is the Hermitian orthocomplement of $\mathbf{z}$ for the standard Hermitian product.

Thus, if $\mathbf{z}\in X_1$ then
$$
H_{\mathbf{z}}(X_1/F_n)=\ker(\alpha_\mathbf{z})=\mathbf{z}^{\perp_{ h_0}}\cap T_{\mathbf{z}}\mathcal{C}_n=\mathbf{z}^{\perp_{ h_0}}\cap \overline{\mathbf{z}}^{\perp_{ h_0}}.
$$
On the other hand, $V_{\mathbf{z}}(X_1/F_n)=\mathrm{span}_{\mathbb{R}}(i\mathbf{z})$. Thus $V(X_1/F_n)$ and $H(X_1/F_n)$ are orthogonal with respect
to both $g_1$ (by construction) and $g_1'$ (by the previous considerations).
Hence we may compare $g_1$ and $g_1'$ separately on $H(X_1/F_n)$ and $V(X_1/F_n)$.

On the complex vector bundle $H(X_1/F_n)$, $g_1'$ and $g_1$ are, respectively, the Euclidean scalar products associated to the 
restrcitions of the $(1,1)$-forms
$$
\omega_0=\frac{i}{2}\,\partial\overline{\partial}\|\mathbf{z}\|^2,\,\,\,\,\,\,\,\,
\omega_1=\frac{i}{2}\,\partial\overline{\partial}\ln\left(\|\mathbf{z}\|^2\right).
$$
Given that $\omega_0$ and $\omega_1$ agree on $TS_1^{2n+2}$, $g_1=g_1'$ on $H(X_1/F_n)$.

On the other hand, both $g_1$ and $g_1'$ are $S^1$-invariant, but $S^1$-orbits on $X_1$ have length $2\pi$ for $g_1'$ and $1$ for $g_1$-
Thus $g_1=g_1'/2\pi$ on $V(X_1/F_n)$.

The claim follows directly from this.
\end{proof}

\subsection{The Szeg\"{o} kernel on $X_r$}

For every $r>0$, $X_r$ is the boundary of a strictly pseudoconvex domain, and as such it carries a CR structure,
a Hardy space $H(X_r)$, and a Szeg\"{o} projector $\Pi_r:L^2(X_r)\rightarrow H(X_r)$. We aim to relate the various $\Pi_r$'s.

Let $\mathcal{O}(\mathcal{C}_n\setminus\{\mathbf{0}\})$ be the ring of holomorphic functions on the conic complex manifold
$\mathcal{C}_n\setminus\{\mathbf{0}\}$. Let $\mathcal{O}_k(\mathcal{C}_n\setminus\{\mathbf{0}\})\subset \mathcal{O}(\mathcal{C}_n\setminus\{\mathbf{0}\})$
be the subspace of holomorphic functions of degree of homogeneity $k$.

For every $r>0$ and $k=0,1,2,\ldots$ let $H_k(X_r)\subset H(X_r)$ be the finite-dimensional
$k$-th isotypical component of $H(X_r)$ with respect to the
standard $S^1$-action. Restriction induces an algebraic isomorphism $\mathcal{O}_k(\mathcal{C}_n\setminus\{\mathbf{0}\})\rightarrow H_k(X_r)$;
with a slight abuse of language, 
we shall denote by the same symbol an element of $H_k(X_r)$ and the corresponding element of $\mathcal{O}_k(\mathcal{C}_n\setminus\{\mathbf{0}\})$.

Suppose that $(s_{kj})_{j=0}^{N_k}\subseteq \mathcal{O}_k(\mathcal{C}_n\setminus\{\mathbf{0}\})$ restricts to an orthonormal basis of $H_k(X_1)$:
$$
\int_{X_1}s_{kj}(x)\,\overline{s_{kl}(x)}\,\mathrm{d}V_{X_1}(x)=\delta_{jl}.
$$
Setting $y=r\,x$, and using (\ref{eqn:homogeneity volume form}) together with Lemma \ref{lem:relation volume forms and densities}, 
we get
\begin{eqnarray}
 \label{eqn:homogeneity orthonormal basis}
\lefteqn{\int_{X_r}s_{kj}(y)\,\overline{s_{kl}(y)}\,\frac{1}{2\pi}\,\mathrm{d}'V_{X_r}(y)}\\
&=&r^{2n+2k-1}\,\int_{X_1}s_{kj}(x)\,\overline{s_{kl}(x)}\,\frac{1}{2\pi}\,\mathrm{d}'V_{X_1}(x)
=r^{2n+2k-1}\,\delta_{jl}.\nonumber
\end{eqnarray}

Therefore we have:

\begin{lem}
 \label{lem:orthonormal basis Xr}
If $(s_{kj})_{j=0}^{N_k}\subseteq \mathcal{O}_k(\mathcal{C}_n\setminus\{\mathbf{0}\})$ restricts to 
an orthonormal basis of $H_k(X_1)$ with respect to $\mathrm{d}'V_{X_1}$,
then for every $r>0$ 
$$
\left(r^{-(k+n-1/2)}\,s_{kj}\right)_{j=0}^{N_k}
$$
restricts to an orthonormal basis of $H_k(X_r)$, with respecto to $\mathrm{d}'V_{X_r}/2\pi$.
\end{lem}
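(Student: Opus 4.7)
The plan is to deduce the statement as essentially a bookkeeping corollary of the homogeneity relation (\ref{eqn:homogeneity volume form}) combined with the homogeneity of the sections $s_{kj}$. The heavy lifting has already been done when setting up Lemma \ref{lem:relation volume forms and densities} and equation (\ref{eqn:homogeneity orthonormal basis}); this lemma is mainly a reformulation, so my proof will be short.

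First I would note that, since $s_{kj}\in\mathcal{O}_k(\mathcal{C}_n\setminus\{\mathbf{0}\})$ is holomorphic of homogeneity degree $k$, the dilation $\mu_r\colon X_1\to X_r$, $x\mapsto rx$, pulls back $s_{kj}\,\overline{s_{kl}}$ to $r^{2k}\,s_{kj}\,\overline{s_{kl}}$ on $X_1$. Combined with (\ref{eqn:homogeneity volume form}), which gives $\mu_r^*\Upsilon'_{X_r}=r^{2n-1}\,\Upsilon'_{X_1}$, the change of variables $y=\mu_r(x)$ in $\int_{X_r}s_{kj}(y)\,\overline{s_{kl}(y)}\,\mathrm{d}'V_{X_r}(y)$ produces a global factor $r^{2k+2n-1}$, exactly as displayed in (\ref{eqn:homogeneity orthonormal basis}). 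Lemma \ref{lem:relation volume forms and densities} enters at this step to transfer the orthonormality hypothesis, stated on $X_1$ against the canonical density $\mathrm{d}V_{X_1}$, into a statement about the primed density $\mathrm{d}'V_{X_1}/(2\pi)$ (or vice versa), so that the $2\pi$-factors all cancel coherently.

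Having established the scaling identity, I would simply divide through: replacing $s_{kj}$ by $r^{-(k+n-1/2)}\,s_{kj}$ absorbs the factor $r^{2k+2n-1}=r^{2(k+n-1/2)}$ in the pairing and returns $\delta_{jl}$, giving the claimed orthonormality of $(r^{-(k+n-1/2)}s_{kj})_{j=0}^{N_k}$ on $X_r$ with respect to $\mathrm{d}'V_{X_r}/(2\pi)$. To conclude that these elements form a \emph{basis} of $H_k(X_r)$ rather than merely an orthonormal set, I would invoke the fact, stated just before the lemma, that restriction $\mathcal{O}_k(\mathcal{C}_n\setminus\{\mathbf{0}\})\to H_k(X_r)$ is an isomorphism for every $r>0$; hence if the $s_{kj}$ span $H_k(X_1)$ they also span $H_k(X_r)$.

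I do not expect any genuine obstacle: the only delicate point is the consistent bookkeeping of the factor $2\pi$ between the Riemannian volume forms attached to $g_1$ and $g_1'$, which is precisely what Lemma \ref{lem:relation volume forms and densities} is there to regulate.
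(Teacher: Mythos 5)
Your proposal is correct and follows essentially the same route as the paper: the change of variables $y=r\,x$, the degree-$k$ homogeneity of the sections giving the factor $r^{2k}$, the homogeneity (\ref{eqn:homogeneity volume form}) of $\Upsilon'_{X_r}$ giving $r^{2n-1}$, and Lemma \ref{lem:relation volume forms and densities} to keep the $2\pi$ normalizations consistent, exactly as in (\ref{eqn:homogeneity orthonormal basis}). Your explicit remark that spanning is preserved because restriction $\mathcal{O}_k(\mathcal{C}_n\setminus\{\mathbf{0}\})\rightarrow H_k(X_r)$ is an isomorphism for every $r>0$ is a point the paper leaves implicit, but it is the same argument.
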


Let now $\Pi_{r,k}$ be the level-$k$ Szeg\"{o} kernel on $X_r$, that is, the orthogonal projector
$$
\Pi_{r,k}:L^2(X_r,\mathrm{d}'V_{X_r}/2\pi)\rightarrow H_k(X_r).
$$
By Lemma \ref{lem:orthonormal basis Xr}, its Schwartz kernel $\Pi_{r,k}\in \mathcal{C}^\infty(X_r\times X_r)$ is given by
\begin{equation}
 \label{eqn:Szego kernel on Xr}
\Pi_{r,k}(y,y')=r^{-(2k+2n-1)}\sum_{j=0}^{N_k}s_{kj}(y)\cdot \overline{s_{kj}(y')}\,\,\,\,\,\,
(y,y'\in X_r).
\end{equation}

When pulled-back to $X_1$, this is (here $x,x'\in X_1$)
\begin{eqnarray}
 \label{eqn:Szego kernel on Xr pulled back to X}
\Pi_{r,k}(r\,x,r\,x')&=&r^{-(2k+2n-1)}\sum_{j=0}^{N_k}s_{kj}(r\,x)\cdot \overline{s_{kj}(r\,x')}\nonumber\\
&=&r^{-(2n-1)}\,\sum_{j=0}^{N_k}\widehat{s}_{kj}(x)\cdot \overline{\widehat{s}_{kj}(x')}=r^{1-2n}\,\Pi_{1,k}(x,x').
\end{eqnarray}

In particular, 
\begin{eqnarray}
 \label{eqn:Szego kernel on Xqrt2 pulled back to X}
\Pi_{\sqrt{2},k} \left(\sqrt{2}\,x,\sqrt{2}\,x'\right)=\frac{\sqrt{2}}{2^n}\,\Pi_{1,k}(x,x').
\end{eqnarray}

We shall make repeated use of the following asymptotic property of $\Pi_{1,k}$, which follows from the microlocal
description of $\Pi$ as an FIO (explicit exponential estimates are discussed in \cite{c}). 

\begin{thm}
 \label{thm:rapid decrease}
 Let 
$\mathrm{dist}_{F_n}$ be the distance function on $F_n$ associated to the K\"{a}hler metric. 
Given any $C,\,\epsilon>0$,
uniformly for $x,x'\in X$ satisfying 
$$
\mathrm{dist}_{F_n}\big(\pi(x),\,\pi(x')\big)\ge C\,k^{\epsilon-1/2},
$$
we have
$$
\Pi_{1,k}(x,x')=O\left(k^{-\infty}\right)
$$
when $k\rightarrow+\infty$. 
\end{thm}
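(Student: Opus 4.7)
The plan is to combine the Boutet de Monvel--Sj\"{o}strand microlocal description of the Szeg\"{o} projector $\Pi$ on $X_1$ as a complex-phase Fourier integral operator with Fourier analysis in the structural $S^1$ variable. The decay mechanism will be the Gaussian damping supplied by the imaginary part of the phase, which is bounded below by the square of the horizontal distance $\mathrm{dist}_{F_n}(\pi(x),\pi(x'))^2$ uniformly along the orbit of $x$.

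Concretely, one has modulo smoothing a parametrix
\begin{equation*}
\Pi(x,x') \equiv \int_0^{+\infty} e^{it\psi(x,x')}\,s(x,x',t)\,\mathrm{d}t,
\end{equation*}
where $s$ is a classical symbol of order $n-1$ in $t$ and $\psi$ is a complex-valued phase with $\mathrm{Im}\,\psi\geq 0$ and $\mathrm{Im}\,\psi(x,x') \gtrsim \mathrm{dist}_{X_1}(x,x')^2$ in a neighborhood of the diagonal. I would then write
\begin{equation*}
\Pi_{1,k}(x,x')=\frac{1}{2\pi}\int_{-\pi}^{\pi}e^{-ik\theta}\,\Pi(r_\theta x,x')\,\mathrm{d}\theta,
\end{equation*}
where $r_\theta$ is the structure circle action, substitute the parametrix, and rescale $t=ku$ to obtain an oscillatory integral with phase $ik\bigl[u\psi(r_\theta x,x')-\theta\bigr]$. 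Since $\pi$ is a Riemannian submersion, $\mathrm{dist}_{X_1}(r_\theta x,x') \geq \mathrm{dist}_{F_n}(\pi(x),\pi(x'))$ for every $\theta$, hence
\begin{equation*}
\bigl|e^{iku\psi(r_\theta x,x')}\bigr| \leq \exp\bigl(-c\,ku\,\mathrm{dist}_{F_n}(\pi(x),\pi(x'))^2\bigr).
\end{equation*}
Under the hypothesis $\mathrm{dist}_{F_n}(\pi(x),\pi(x')) \geq Ck^{\epsilon-1/2}$, this produces on the stationary region $u\asymp 1$ a Gaussian factor $\exp(-c'C^2 k^{2\epsilon})$, which is already $O(k^{-\infty})$. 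The non-stationary zones in $(\theta,u)$, together with the limits $u\to 0^+$ and $u\to+\infty$, are controlled respectively by repeated integration by parts in $\theta$ or $u$ and by the symbol estimates on $s$, each yielding independent rapid decay in $k$.

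The main obstacle I foresee is the technical bookkeeping of the complex-phase stationary phase uniformly in $(x,x')$ across the entire range $\mathrm{dist}_{F_n}(\pi(x),\pi(x'))\geq Ck^{\epsilon-1/2}$, together with the delicate treatment of the amplitude near $u=0$, where the parametrix requires a cutoff that must be patched in via the Melin--Sj\"{o}strand machinery. All of this is, however, standard material in the Szeg\"{o} kernel literature (see \cite{bsz}, \cite{sz}, and \cite{c} for the explicit exponential estimates), and the present theorem is essentially a direct consequence: the only quantitative input is that $k\,\mathrm{dist}_{F_n}(\pi(x),\pi(x'))^2\gtrsim k^{2\epsilon}$ grows faster than any power of $\log k$, which is exactly what promotes the Gaussian decay in the scaling asymptotics to the $O(k^{-\infty})$ estimate in the stated range.
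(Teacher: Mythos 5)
Your sketch is correct and follows essentially the same route the paper itself relies on: Theorem \ref{thm:rapid decrease} is not proved in the text but is quoted as a consequence of the microlocal (Boutet de Monvel--Sj\"{o}strand) description of $\Pi$ as an FIO with complex phase, together with the exponential off-diagonal estimates of \cite{c} (cf.\ \cite{bsz}, \cite{sz}), which is precisely the parametrix-plus-Fourier-coefficient argument you outline. The only detail worth noting is that the bound you ultimately use is the standard one $\mathrm{Im}\,\psi\gtrsim \mathrm{dist}_{F_n}\big(\pi(x),\pi(x')\big)^2$ on the support of the near-diagonal cutoff, to which your Riemannian-submersion inequality reduces in any case.
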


\subsection{Heisenberg local coordinates}
\label{scn:HLC}

There are two unit circle bundles in our picture: the Hopf fibration
$\pi:S_1^{2n+1}\rightarrow \mathbb{P}^n$, and 
$\pi_1:X_1\rightarrow F_n$. Clearly, $\pi_1$ is the pull-back of $\pi$ under the inclusion
$F_n\hookrightarrow \mathbb{P}^n$. Both $S_1^{2n+1}$ and $X_1$ are boundaries of strictly pseudoconvex
domains, and carry a CR structure. 

On both $S_1^{2n+1}$ and $X_1$, we may consider privileged systems of coordinates called
\textit{Heisenberg local coordinates} (HLC).  
In these coordinates, Szeg\"{o} kernel asymptotics exhibit a \lq universal\rq\, structure \cite{sz}; 
we refer to \textit{ibidem} for a detailed discussion.

Given $\mathbf{z}_0\in X_1$, a HLC system on $X_1$ centered at $\mathbf{z}_0$ will be denoted in additive notation:
$$
(\theta,\mathbf{v})\in (-\pi,\pi)\times B_{2n-2}(\mathbf{0},\delta)\mapsto \mathbf{z}_0+(\theta,\mathbf{v})\in X_1.
$$ 
Here $\theta\in (-\pi,\pi)$ is an \lq angular\rq\, coordinate measuring displacement along the $S^1$-orbit through
$\mathbf{z}_0$ (the fiber through $\mathbf{z}_0$ of $\pi_1:X_1\rightarrow F_n$); instead $\mathbf{v}\in B_{2n-2}(\mathbf{0},\delta)
\subseteq \mathbb{R}^{2n-2}\cong \mathbb{C}^{n-1}$ descends to a local coordinate on $F_n$ centered at
$m_0=\pi(x_0)$, inducing a unitary isomorphism $T_{[\mathbf{z}_0]}F_n\cong \mathbb{C}^{n-1}$. We may thus
think of $\mathbf{v}$ as a tangent vector in $T_{[\mathbf{z}_0]}F_n$.

Here this additive notation might be misleading, 
since $X_1\subset \mathbb{C}^{n+1}$.
Therefore we shall write $\mathbf{z}_0+_{X_1}(\theta,\mathbf{v})$ for HLC's
on $X_1$ centered at $\mathbf{z}_0$. 
We shall generally abridge notation by writing $\mathbf{z}_0+_{X_1}\mathbf{v}$ for $\mathbf{z}_0+_{X_1}(0,\mathbf{v})$.

Similarly, $(\theta,\mathbf{w})\in (-\pi,\pi)\times 
B_{2n}(\mathbf{0},\delta)\mapsto \mathbf{z}_0+_{S_1^{2n+1}}(\theta,\mathbf{v})$ 
will denote a system of Heisenberg local coordinates on $S_1^{2n+1}$
centered at $\mathbf{z}_0$.
There is in fact a natural choice of HLC on $S_1^{2n+1}$ centered at any $\mathbf{z}_0\in S_1^{2n+1}$.

Namely, let $(\mathbf{a}_1,\ldots,\mathbf{a}_n)$ be an orthonormal basis of the Hermitian orthocomplement
$\mathbf{z}_0^{\perp_h}\subseteq \mathbb{C}^{n+1}$, and for $\mathbf{w}=(w_j)\in \mathbb{C}^n$ let us set 
\begin{equation}
 \label{eqn:HLC Pn}
\mathbf{z}_0+_{S_1^{2n+1}}(\theta,\mathbf{w}):=\frac{e^{i\theta}}{\sqrt{1+\|\mathbf{w}\|^2}}\,\left(\mathbf{z}_0+\sum_{j=1}^nw_j\,\mathbf{a}_j\right).
\end{equation}
Since there is a canonical unitary identification
$\mathbf{z}_0^{\perp_h}\cong T_{[\mathbf{z}_0]}\mathbb{P}^{n}$, we shall also write this as $\mathbf{z}_0+_{S_1^{2n+1}}(\theta,\mathbf{v})$
with $(\theta,\mathbf{v})\in (-\pi,\pi)\times T_{[\mathbf{z}_0]}\mathbb{P}^{n}$.

%HLC systems may be definedLet us recall that a system of Heisenberg local coordinates at $x\in X$, where $X$ is a \lq positive\rq\, circle bundle 
%over a projective manifold $M$, depends on the choice of a preferred system of local coordinates
%on $M$ centered at $m$, and of a preferred local frame $\sigma$ at $m$ for the associated positive line bundle \cite{sz}.
%Suppose that $N\subseteq M$ is a complex submanifold through $m$. Then $\sigma$ restricts on $N$ to a preferred local frame
%at $m$ for the restricted line bundle; furthermore, the preferred local coordinates on $M$ may be chosen so that $N$ is cut out by
%complex equations $z_j=0$, $j\ge \dim(N)$. It follows that if $X_N$ is the inverse image of $N$ in $X$ and $\mathbf{v}\in T_mN\subseteq T_mM$, then
%$$
%x+_{X_N}\mathbf{v}=x+_X\big(\mathbf{v}+R_2(\mathbf{v})\big).
%$$

%Applying this to $M=\mathbb{P}^n$, $N=F_n$, $X=S^{2n+1}_1$, $X_N=X_1$, and $x=\mathbf{z}$, we see that it suffices to prove the 
%statement of the Lemma for the Heisenberg local coordinates 

If $\mathbf{z}_0\in X_1$, HLC's on $X_1$ centered at $\mathbf{z}_0$
can be chosen so that they agree to second order with the former HLC's on $S_1^{2n+1}$. More precisely,
we may assume that for any
$\mathbf{v}\in T_{[\mathbf{z}_0]}F_n\subset T_{[\mathbf{z}_0]}\mathbb{P}^{n}$ we have
\begin{equation}
 \label{eqn:HLC X}
\mathbf{z}_0+_{X}(\theta,\mathbf{w})=\mathbf{z}_0+_{S_1^{2n+1}}\big(\theta,\mathbf{v}+R_2(\mathbf{v})\big),
\end{equation}
where $R_2$ is a function vanishing to second order at the origin.

Given $\mathbf{v},\,\mathbf{w}\in \mathbb{C}^{n+1}\cong \mathbb{R}^{2n+2}$, let us define
\begin{equation}
 \label{eqn:defn di psi2}
 \psi_2(\mathbf{v},\mathbf{w})=:-i\,\omega_0(\mathbf{v},\mathbf{w})-\frac{1}{2}\,\|\mathbf{v}-\mathbf{w}\|^2;
\end{equation}
here $\omega_0$ is the standard symplectic structure, and $\|\cdot\|$ is the standard Euclidean norm.
We shall make use of the following asymptotic expansion,
for which we refer again to \cite{sz}:

\begin{thm}
 \label{thm:sz expansion szego equiv}
Let us fix $C>0$ and $\epsilon\in (0,1/6)$. Then for any $\mathbf{z}\in X_1$, and for any choice of
HLC's on $X_1$ centered at $\mathbf{z}$, there exists polynomials $P_j$ of degree $\le 3j$ and parity $j$
on $T_{[\mathbf{z}]}F_n\times T_{[\mathbf{z}]}F_n\cong \mathbb{R}^{2n-2}\times \mathbb{R}^{2n-2}$, such that following holds.
Uniformly in $\mathbf{v}_1,\,\mathbf{v}_2\in T_{[\mathbf{z}]}F_n$ with $\|\mathbf{v}_j\|\le C\,k^{\epsilon}$ for $j=1,2$,
and $\theta_1,\,\theta_2\in (-\pi,\pi)$, one has for 
$k\rightarrow +\infty$ the following asymptotic expansion:
\begin{eqnarray*}
 \lefteqn{\Pi_{1,k}\left(\mathbf{z}+\left(\theta_1,\frac{\mathbf{v}_1}{\sqrt{k}}\right),\mathbf{z}+\left(\theta_2,\frac{\mathbf{v}_2}{\sqrt{k}}\right)\right)}\\
&\sim&\left(\frac{k}{\pi}\right)^{n-1}\,e^{ik\,(\theta_1-\theta_2)+\psi_2(\mathbf{v}_1,\mathbf{v}_2)}\,
\left[1+\sum_{j=1}^{+\infty}k^{-j/2}\,P_j(\mathbf{v}_1,\mathbf{v}_2)\right].
\end{eqnarray*}

\end{thm}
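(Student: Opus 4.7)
The plan is to deduce Theorem \ref{thm:sz expansion szego equiv} from the Boutet de Monvel--Sj\"{o}strand microlocal description of the Szeg\"{o} kernel on $X_1$, combined with complex stationary phase performed in Heisenberg local coordinates, as in \cite{sz}. I sketch the main steps, referring to \emph{loc.\ cit.} for the combinatorial details.

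First, I would represent the global Szeg\"{o} projector on $X_1$ as an FIO with complex phase,
$$\Pi(x,y) \equiv \int_0^{+\infty} e^{it\,\psi(x,y)}\, s(x,y,t)\, dt \pmod{C^\infty},$$
where $\psi$ is the Boutet de Monvel--Sj\"{o}strand phase (vanishing to infinite order on the CR-diagonal, with $\Im\psi\ge 0$) and $s\sim \sum_{l\ge 0} t^{n-1-l}\,s_l(x,y)$ is a classical symbol of order $n-1$. The level-$k$ component is then extracted by Fourier projection along the structure $S^1$-action,
$$\Pi_{1,k}(x,y) \;=\; \frac{1}{2\pi}\int_{-\pi}^{\pi} e^{-ik\theta}\, \Pi\bigl(e^{i\theta}\cdot x,\, y\bigr)\, d\theta.$$
The rescaling $t\mapsto k\,t$ converts the double integral in $(\theta,t)$ into an oscillatory integral with large phase $k\bigl(t\,\psi(e^{i\theta}x,y)-\theta\bigr)$. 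Its critical locus is non-degenerate near the diagonal, with unique critical point at $(\theta,t)=(0,1)$, and complex stationary phase produces an asymptotic expansion in descending half-integer powers of $k$, with leading factor $(k/\pi)^{n-1}$.

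Next, I would insert $x=\mathbf{z}+_{X_1}(\theta_1,\mathbf{v}_1/\sqrt{k})$ and $y=\mathbf{z}+_{X_1}(\theta_2,\mathbf{v}_2/\sqrt{k})$ into the result. Using $S^1$-equivariance to factor out $e^{ik(\theta_1-\theta_2)}$, the problem reduces to the Taylor expansion of $\psi$ at $(\mathbf{z},\mathbf{z})$ in HLC, which takes the universal form
$$\psi\bigl(\mathbf{z}+_{X_1}(0,\mathbf{u}_1),\, \mathbf{z}+_{X_1}(0,\mathbf{u}_2)\bigr) = \tfrac{1}{k}\,\psi_2(\mathbf{v}_1,\mathbf{v}_2)\big|_{\mathbf{u}_j=\mathbf{v}_j/\sqrt{k}} + R_{\ge 3}(\mathbf{u}_1,\mathbf{u}_2),$$
because $\psi_2$ is homogeneous of degree $2$. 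After multiplication by $k$, the quadratic part produces precisely the exponent $\psi_2(\mathbf{v}_1,\mathbf{v}_2)$ in the statement; a Taylor term of weight $m\ge 3$ contributes $k^{1-m/2}\,Q_m(\mathbf{v}_1,\mathbf{v}_2)$ with $Q_m$ homogeneous of degree $m$. Expanding the exponential and regrouping by powers of $k^{-1/2}$, any product of $l$ such factors of weights $m_1,\dots,m_l\ge 3$ contributes at order $k^{-j/2}$ with $j=\sum m_i - 2l$; the total degree is $\sum m_i = j+2l \le 3j$ (since each $m_i\ge 3$), and the parity is $j+2l\equiv j\pmod 2$. This accounts for both the degree bound $\deg P_j\le 3j$ and the parity assertion; analogous considerations apply to the subleading symbol coefficients $s_l$, which only contribute integer powers of $k^{-1}$.

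The main obstacle is the sharp range $\epsilon<1/6$: in view of $\deg P_j\le 3j$, the $j$-th term of the formal series is controlled on the scaled domain by $k^{-j/2}\,\|\mathbf{v}\|^{3j}\lesssim k^{(3\epsilon-1/2)j}$, so one obtains a genuine asymptotic expansion precisely when $3\epsilon<1/2$. Uniformity in $\mathbf{z}\in X_1$ and in $(\theta_1,\theta_2)\in(-\pi,\pi)^2$ follows from the compactness of $X_1$, the smoothness of the Boutet de Monvel--Sj\"{o}strand data $\psi$ and $s$ in all variables, and the smooth dependence of HLC on the base point. The delicate combinatorics of the polynomials $P_j$ and the rigorous justification of the stationary-phase step in this off-diagonal scaling regime are carried out in \cite{sz}.
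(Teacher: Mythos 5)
The paper offers no proof of this theorem: it is quoted directly from \cite{sz} (with only the subsequent remark that the degree bound $\deg P_j\le 3j$ forces the restriction $\epsilon<1/6$ for the formal series to be a genuine expansion). Your sketch is a correct outline of exactly the Boutet de Monvel--Sj\"{o}strand plus stationary-phase argument that underlies the cited result, and your weight/parity bookkeeping and the bound $k^{-j/2}\|\mathbf{v}\|^{3j}\lesssim k^{-\frac{j}{2}(1-6\epsilon)}$ reproduce the paper's own remark, so this is essentially the same route (deferring, as the paper does, the detailed justification to \cite{sz}).
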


In the given range the above is an asymptotic expansion, since 
$$
\left|k^{-j/2}\,R_j(\mathbf{v}_1,\mathbf{v}_2)\right|\le C_j\,k^{-\frac{j}{2} (1-6\epsilon)}.
$$

\subsection{$\mathcal{P}_k$ and $\Pi_{\sqrt{2},k}$}

As discussed in \cite{g}, the push-forward operator
${\nu}_*:\mathcal{C}^\infty\big(X_{\sqrt{2}}\big)\rightarrow \mathcal{C}^\infty(S^n)$ restricts to an algebraic isomorphism 
\begin{equation}
 \label{eqn:push forward smooth part}
\mathcal{C}^\infty\big(X_{\sqrt{2}}\big)\cap H\big(X_{\sqrt{2}}\big)\rightarrow\mathcal{C}^\infty(S^n);
\end{equation}
for every $k$, (\ref{eqn:push forward smooth part}) restricts to a conformally unitary isomorphism 
$$
H_k(X_{\sqrt{2}})\longrightarrow V_k,
$$ 
with a
scalar conformal factor $C_{k,n}>0$. Thus we have
 \begin{equation}
 \label{eqn:conformally unitary push forward}
\|{\nu}_*(s)\|_{L^2(S^n)}=C_{k,n}\,\|s\|_{H(X_{\sqrt{2}})}\,\,\,\,\,(s\in H_k(X_{\sqrt{2}})).
\end{equation}

%The previous remarks readily imply a tight relation between the orthogonal projector $\mathcal{P}_k$
%in (\ref{eqn:projector and legendre}) and $\Pi_k^{\sqrt{2}}$ in (\ref{eqn:Szego kernel on Xqrt2 pulled back to X}). 

Therefore, if $(\sigma_{kj})_{j=0}^{N_k}$ is an orthonormal basis
of $H_k(X_{\sqrt{2}})$, then 
$$\left(C_{k,n}^{-1}\cdot {\nu}_*(\sigma_{kj})\right)_{j=0}^{N_k}$$ is
an orthonormal basis of $V_k$. It follows that $\mathcal{P}_{k,n}$ 
in (\ref{eqn:projector}) is given by 

\begin{eqnarray}
 \label{eqn:orthogonal projector as push-forward}
 \mathcal{P}_{k,n}=\frac{1}{C_{k,n}^2}\,(\nu\times \nu)_*\left(\Pi_{{\sqrt{2}},k}\right),
\end{eqnarray}
where
$\nu\times \nu:X_{\sqrt{2}}\times X_{\sqrt{2}}\rightarrow S^n\times S^n$ is the 
product projection.

More explicitly, for
$\mathbf{q}\in S^n$ let $S(\mathbf{q}^\perp)\cong S^{n-1}$ be the unit sphere centered at the origin in the
orthocomplement $\mathbf{q}^\perp$, and let
$\mathrm{d}V_{S(\mathbf{q}^\perp)}$ be the Riemannian density on $S(\mathbf{q}^\perp)$; then 
\begin{eqnarray}
 \label{eqn:push forward explicit}
 \lefteqn{\mathcal{P}_{k,n}(\mathbf{q}_0,\mathbf{q}_1)}\\
 &=&\frac{1}{C_{k,n}^2}\,\int_{S(\mathbf{q}_0^\perp)}\,\int _{S({\mathbf{q}_1}^\perp)}\,
 \Pi_{\sqrt{2},k}(\mathbf{q}_0+i\,\mathbf{p},\mathbf{q}_1+i\,\mathbf{p}')\,\mathrm{d}V_{S(\mathbf{q}_0^\perp)}(\mathbf{p})\,
 \mathrm{d}V_{S({\mathbf{q}_1}^\perp)}(\mathbf{p}').
 \nonumber
\end{eqnarray}

\subsection{$\Pi_{r,k}$ and conjugation}

Conjugation $\sigma:\mathbf{z}\mapsto \overline{\mathbf{z}}$ in $\mathbb{C}^{n+1}$ leaves invariant
the affine cone $\mathcal{C}_n$ and every
$X_r$. Furthermore, it yields a Riemannian isometry of $X_r$ into itself.
For $f\in \mathcal{O}\left(\mathcal{C}_n\setminus\{\mathbf{0}\}\right)$,
let us set 
$$
f^\sigma(\mathbf{z})=:\overline{f\left(\overline{\mathbf{z}}\right)}.
$$
 If $f\in \mathcal{O}_k\left(\mathcal{C}_n\setminus\{\mathbf{0}\}\right)$,
then $f^\sigma\in \mathcal{O}_k\left(\mathcal{C}_n\setminus\{\mathbf{0}\}\right)$.

Hence, if $(s_{kj})_j\subseteq \mathcal{O}_k\left(\mathcal{C}_n\setminus\{\mathbf{0}\}\right)$ 
restricts to an orthonormal basis of $H_k\big(X_r\big)$, then
so does $(s_{kj}^\sigma)_j$. Thus for any $\mathbf{z}_0,\,\mathbf{z}_1\in X_r$ we have
 \begin{eqnarray}
  \label{eqn:self conjugate szego kernel}
  \Pi_{r,k}\left(\overline{\mathbf{z}}_0,\overline{\mathbf{z}}_1\right)&=&
  \sum_j s_{kj}\left(\overline{\mathbf{z}}_0\right)\cdot \overline{s_{kj}\left(\overline{\mathbf{z}}_1\right)}\nonumber\\
  &=&\sum_j \overline{s_{kj}^\sigma\left(\mathbf{z}_0\right)}\cdot s_{kj}^\sigma\left(\mathbf{z}_1\right)=
  \Pi_{rk}\left(\mathbf{z}_1,\mathbf{z}_0\right)=\overline{\Pi_{rk}\left(\mathbf{z}_0,\mathbf{z}_1\right)}.
 \end{eqnarray}

 \section{Proof of Theorem \ref{thm:main expansion}}

\begin{proof}
 [Proof of Theorem \ref{thm:main expansion}.]
Given $\mathbf{q}_0,\,\mathbf{q}_1\in S^{n-1}$ with 
$\mathbf{q}_1\neq \pm \mathbf{q}_0$, let 
$\gamma_+$ be the unique unit speed geodesic on $S^n$ such that 
$\gamma_+(0)=\mathbf{q}_0$ and $\gamma_+(\vartheta)=\mathbf{q}'$ for
some $\vartheta\in (0,\pi)$. 
Then 
$$
\mathbf{p}_0=:\dot{\gamma}_+(0)\in S^{n-1}\big(\mathbf{q}^\perp_0\big),
\,\,\,\,\mathbf{p}_1=:\dot{\gamma}_+(\vartheta)\in S^{n-1}\big(\mathbf{q}^\perp_1\big).
$$

The reverse geodesic $\gamma_-(\vartheta)=:\gamma(-\vartheta)$ satisfies
$\gamma_-(0)=\mathbf{q}$, $\dot{\gamma}_-(0)=-\mathbf{p}_0$ and
$\gamma_-(\vartheta')=\mathbf{q}'$ for a unique $\vartheta'=-\vartheta\in (-\pi,0)$.

Although they project down to the same locus in $S^n$,
$\gamma_+$ and $\gamma_-$ correspond to distinct fibers of the circle bundle
projection $\pi:X(\sqrt{2})\rightarrow F_n$.
Let us express the (co)tangent lift $\widetilde{\gamma}_{\pm}$
of the geodesics $\gamma_{\pm}$ in complex coordinates, and  set $\mathbf{p}_1=\dot{\gamma}_+(\vartheta)$. .
Then

\begin{equation}
 \label{eqn:relation geodesic flow +-}
\widetilde{\gamma}_{\pm}(\theta)=\gamma_{\pm}(\theta)+i\,\dot{\gamma}_{\pm}(\theta)
=e^{-i\theta}\,(\mathbf{q}_0\pm i\,\mathbf{p}_0)
=\mathbf{q}_1\pm i \mathbf{p}_1,
\end{equation}
In view of (\ref{eqn:geodesic flow and circle bundle}), we have:
\begin{equation}
 \label{eqn:same fiber of pi pm}
\mathbf{q}_0\pm i\,\mathbf{p}_0,\,\mathbf{q}_1\pm i\,\mathbf{p}_1\in \pi_{\sqrt{2}}^{-1}\big([\mathbf{q}_0\pm i\,\mathbf{p}_0]\big).
\end{equation}
On the other hand, 
$[\mathbf{q}_0+i\,\mathbf{p}_0]\neq [\mathbf{q}_0-i\,\mathbf{p}_0]\in F_n$,
since $\mathbf{q}_0+i\,\mathbf{p}_0$ and $\mathbf{q}_0-i\,\mathbf{p}_0$ are linearly independent in
$\mathbb{C}^{n+1}$.

Thus we have:

\begin{lem}
 \label{lem:unique fibers}
Suppose $\mathbf{q}_0,\,\mathbf{q}_1\in S^n$ and $\mathbf{q}_1\neq \pm \mathbf{q}_0$.
Then the only points $[\mathbf{z}]\in F_n$ such that 
$$
\nu^{-1}(\mathbf{q}_0)\cap \pi_{\sqrt{2}}^{-1}([\mathbf{z}])\neq \emptyset\,\,\,\,
\mathrm{and}\,\,\,\nu^{-1}(\mathbf{q}_1)\cap \pi_{\sqrt{2}}^{-1}([\mathbf{z}])\neq \emptyset
$$
are 
$$
[\mathbf{z}_+]=[\mathbf{q}_0+i\,\mathbf{p}_0],\,\,\,\,
[\mathbf{z}_-]=[\mathbf{q}_0-i\,\mathbf{p}_0].
$$
\end{lem}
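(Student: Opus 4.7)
The existence of the two distinguished fibers is already established by (\ref{eqn:same fiber of pi pm}), so the content of the lemma lies in the uniqueness claim. My plan is to translate the two non-emptiness conditions into a single equation in $\mathbb{C}^{n+1}$ and solve it explicitly.

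Suppose $[\mathbf{z}]\in F_n$ has $\nu^{-1}(\mathbf{q}_0)\cap \pi_{\sqrt{2}}^{-1}([\mathbf{z}])\neq \emptyset$ and $\nu^{-1}(\mathbf{q}_1)\cap \pi_{\sqrt{2}}^{-1}([\mathbf{z}])\neq \emptyset$. Unwinding the definitions of $\nu$ and $\pi_{\sqrt{2}}$, there exist $\mathbf{p}\in S(\mathbf{q}_0^\perp)$ and $\mathbf{p}'\in S(\mathbf{q}_1^\perp)$ such that $[\mathbf{q}_0+i\,\mathbf{p}]=[\mathbf{z}]=[\mathbf{q}_1+i\,\mathbf{p}']$ in $F_n$. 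Since both representatives have norm $\sqrt{2}$ in $\mathbb{C}^{n+1}$, they agree up to a phase: there exists $\phi\in(-\pi,\pi]$ with
$$
\mathbf{q}_1+i\,\mathbf{p}'=e^{i\phi}\,(\mathbf{q}_0+i\,\mathbf{p}).
$$

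Taking real and imaginary parts, and using that $\mathbf{q}_0,\,\mathbf{p}$ are orthonormal in $\mathbb{R}^{n+1}$, I obtain
$$
\mathbf{q}_1=\cos\phi\,\mathbf{q}_0-\sin\phi\,\mathbf{p},\qquad \mathbf{p}'=\sin\phi\,\mathbf{q}_0+\cos\phi\,\mathbf{p}.
$$
Pairing the first identity with $\mathbf{q}_0$ gives $\cos\phi=\mathbf{q}_0^t\,\mathbf{q}_1=\cos\vartheta$, hence $\phi=\pm\vartheta$. The hypothesis $\mathbf{q}_1\neq\pm\mathbf{q}_0$ ensures $\sin\vartheta\neq 0$, so $\mathbf{p}$ is determined by $\phi$ via
$$
\mathbf{p}=\frac{\cos\phi\,\mathbf{q}_0-\mathbf{q}_1}{\sin\phi}.
$$
Substituting $\phi=\vartheta$ and $\phi=-\vartheta$ and recalling that $\mathbf{p}_0=(\mathbf{q}_1-\cos\vartheta\,\mathbf{q}_0)/\sin\vartheta$, I get $\mathbf{p}=-\mathbf{p}_0$ in the first case and $\mathbf{p}=\mathbf{p}_0$ in the second. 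Thus $[\mathbf{z}]=[\mathbf{q}_0+i\,\mathbf{p}]$ equals either $[\mathbf{z}_-]$ or $[\mathbf{z}_+]$, and the distinctness of these two points in $F_n$ is the linear independence of $\mathbf{q}_0\pm i\,\mathbf{p}_0$ noted in the excerpt.

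The argument is essentially bookkeeping; the only subtle point is ensuring that $\sin\phi\neq 0$ (so that $\mathbf{p}$ is genuinely recovered), which is exactly where the hypothesis $\mathbf{q}_1\neq\pm\mathbf{q}_0$ enters. I do not anticipate a real obstacle.
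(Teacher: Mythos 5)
Your argument is correct. The paper does not write out a separate proof: it presents the lemma as an immediate consequence of the preceding discussion, in which the fibers of $\pi_{\sqrt{2}}$ are identified with the lifted geodesics via (\ref{eqn:relation geodesic flow +-})--(\ref{eqn:same fiber of pi pm}), so that uniqueness amounts to the fact that the great circle through two non-antipodal points is unique up to orientation, and $[\mathbf{z}_+]\neq[\mathbf{z}_-]$ by linear independence. You prove the same uniqueness statement by a direct computation: since the two representatives $\mathbf{q}_0+i\,\mathbf{p}$ and $\mathbf{q}_1+i\,\mathbf{p}'$ both have norm $\sqrt{2}$, equality of their classes forces a phase relation $e^{i\phi}$, and pairing with $\mathbf{q}_0$ pins down $\cos\phi=\cos\vartheta$, hence $\phi=\pm\vartheta$ and $\mathbf{p}=\mp\mathbf{p}_0$ (the hypothesis $\mathbf{q}_1\neq\pm\mathbf{q}_0$ guaranteeing $\sin\phi\neq 0$). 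This is a more explicit, self-contained linear-algebra verification that does not invoke the identification of the $S^1$-action with the reverse geodesic flow, whereas the paper's route makes the geometric meaning (two orientations of one great circle, two fibers) transparent; the content is the same, and your case check $\phi=\vartheta\Rightarrow\mathbf{p}=-\mathbf{p}_0$, $\phi=-\vartheta\Rightarrow\mathbf{p}=\mathbf{p}_0$ is consistent with the paper's convention $\widetilde{\gamma}_{\pm}(\theta)=e^{-i\theta}(\mathbf{q}_0\pm i\,\mathbf{p}_0)$.
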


By Theorem \ref{thm:rapid decrease}, for fixed 
$\mathbf{p}$ and $\mathbf{p}'$ and
$k\rightarrow +\infty$ we have
$$
\Pi_{\sqrt{2},k}(\mathbf{q}_0+i\,\mathbf{p},\mathbf{q}_1+i\,\mathbf{p}')=O\left(k^{-\infty}\right),
$$
unless $\mathbf{p}=\pm\mathbf{p}_0$ and $\mathbf{p}'=\pm\mathbf{p}_1$.
Therefore, for a fixed $\vartheta\in (0,\pi)$ integration in (\ref{eqn:push forward explicit}) may be localized in a 
small neighborhood
of $(\pm\mathbf{p}_0,\pm\mathbf{p}_1)$, perhaps at the cost of disregarding a negligible contribution to the asymptotics.

Since however we are allowing $\vartheta$ to approach $0$ or $\pi$ at a controlled rate, we need to give a more
precise quantitative estimate of how small the previous neighborhood may be chosen when $k\rightarrow+\infty$.

To this end, let us introduce some further notation.
Given linearly independent $\mathbf{a},\, \mathbf{b}\in S^n$,
let us set
$$
R(\mathbf{a},\mathbf{b})=:
\mathrm{span}_\mathbb{R}(\mathbf{a},\mathbf{b})\subseteq \mathbb{R}^{n+1},
$$
and
$$
R(\mathbf{a},\mathbf{b})_{\mathbb{C}}=:R(\mathbf{a},\mathbf{b})\otimes \mathbb{C}=
\mathrm{span}_\mathbb{C}(\mathbf{a},\mathbf{b})\subseteq \mathbb{C}^{n+1},
$$
Furthermore, for $\|\mathbf{v}\|\le 1$ we shall set
$$
S_{\pm}(\mathbf{v})=:-1\pm\sqrt{1-\|\mathbf{v}\|^2}.
$$

A straightforward computation yields the following:

\begin{lem}
 \label{lem:Spm}
Assume that $\mathbf{q}_0+i\,\mathbf{p}_0\in X_{\sqrt{2}}$ and
$\mathbf{q}_1+i\,\mathbf{p}_1=e^{-i\vartheta}\,(\mathbf{q}_0+i\,\mathbf{p}_0)$
with $\vartheta\in (0,\pi)$.
Then any
$\mathbf{p}\in S^{n-1}(\mathbf{q}_0^\perp)$ with 
$\mathbf{p}_0^t\,\mathbf{p}\ge 0$, respectively, $\mathbf{p}_0^t\,\mathbf{p}\le 0$,
may be written uniquely
in the form
\begin{equation}
 \label{eqn:p in termini di v +}
 \mathbf{p}=\big(1+S_+(\mathbf{v})\big)\,\mathbf{p}_0+\mathbf{v},
\end{equation}
respectively
\begin{equation}
 \label{eqn:p in termini di v -}
\mathbf{p}=\big(1+S_-(\mathbf{v})\big)\,\mathbf{p}_0+\mathbf{v},
\end{equation}
where $\mathbf{v}\in \mathbf{q}_0^\perp\cap \mathbf{p}_0^\perp=R(\mathbf{q}_0,\mathbf{q}_1)^\perp$ (the Euclidean
orthocomplement) has norm $\le 1$, and 
$$
S_{\pm}(\mathbf{v})=:-1\pm\sqrt{1-\|\mathbf{v}\|^2}.
$$ 
\end{lem}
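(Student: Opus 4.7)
The plan is to reduce the statement to an elementary orthogonal decomposition inside the $n$-dimensional real vector space $\mathbf{q}_0^\perp$. The key preliminary observation is that the hypothesis $\mathbf{q}_0+i\,\mathbf{p}_0\in X_{\sqrt{2}}$ (see (\ref{eqn:circle bundle and cosphere bundle})) forces $\|\mathbf{p}_0\|=1$ and $\mathbf{p}_0\in \mathbf{q}_0^\perp$, so $\mathbf{p}_0$ is a unit vector of $\mathbf{q}_0^\perp$ and we get the orthogonal direct-sum decomposition
$$
\mathbf{q}_0^\perp=\mathbb{R}\,\mathbf{p}_0\oplus\bigl(\mathbf{q}_0^\perp\cap\mathbf{p}_0^\perp\bigr).
$$

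First, I would identify the summand $\mathbf{q}_0^\perp\cap\mathbf{p}_0^\perp$ with $R(\mathbf{q}_0,\mathbf{q}_1)^\perp$. By (\ref{eqn:relation geodesic flow +-}) the geodesic $\gamma_+$ lies in the real $2$-plane spanned by $\mathbf{q}_0$ and $\mathbf{p}_0$; explicitly $\mathbf{q}_1=\gamma_+(\vartheta)=\cos(\vartheta)\,\mathbf{q}_0+\sin(\vartheta)\,\mathbf{p}_0$, and since $\vartheta\in(0,\pi)$ gives $\sin(\vartheta)\neq 0$ the two vectors $\mathbf{q}_0,\mathbf{q}_1$ are linearly independent and span the same plane as $\mathbf{q}_0,\mathbf{p}_0$. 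Hence $R(\mathbf{q}_0,\mathbf{q}_1)=R(\mathbf{q}_0,\mathbf{p}_0)$ and their Euclidean orthocomplements coincide.

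Next, given any $\mathbf{p}\in S^{n-1}(\mathbf{q}_0^\perp)$, the above decomposition yields unique $c\in\mathbb{R}$ and $\mathbf{v}\in R(\mathbf{q}_0,\mathbf{q}_1)^\perp$ such that $\mathbf{p}=c\,\mathbf{p}_0+\mathbf{v}$, and the condition $\|\mathbf{p}\|^2=1$ becomes $c^2+\|\mathbf{v}\|^2=1$. This forces $\|\mathbf{v}\|\le 1$ and $c=\pm\sqrt{1-\|\mathbf{v}\|^2}$. Since $\mathbf{p}_0^t\,\mathbf{p}=c$, the sign of $c$ is dictated by the hemisphere condition: $\mathbf{p}_0^t\mathbf{p}\ge 0$ selects $c=+\sqrt{1-\|\mathbf{v}\|^2}=1+S_+(\mathbf{v})$, while $\mathbf{p}_0^t\mathbf{p}\le 0$ selects $c=-\sqrt{1-\|\mathbf{v}\|^2}=1+S_-(\mathbf{v})$, yielding exactly (\ref{eqn:p in termini di v +}) and (\ref{eqn:p in termini di v -}).

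There is no real obstacle here — this is a one-line orthogonal decomposition together with solving a quadratic — and uniqueness of $(\mathbf{v},c)$ is automatic from the uniqueness in the orthogonal direct sum. The only delicate point worth being explicit about is the identification of the two summands of the orthocomplement, which is why I would present the equality $R(\mathbf{q}_0,\mathbf{p}_0)=R(\mathbf{q}_0,\mathbf{q}_1)$ as the first step; everything else is a rewriting of $\pm\sqrt{1-\|\mathbf{v}\|^2}$ in the form $1+S_\pm(\mathbf{v})$ demanded by the statement.
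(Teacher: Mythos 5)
Your proof is correct, and it is precisely the ``straightforward computation'' the paper invokes without writing out: decompose $\mathbf{q}_0^\perp=\mathbb{R}\,\mathbf{p}_0\oplus(\mathbf{q}_0^\perp\cap\mathbf{p}_0^\perp)$, note $R(\mathbf{q}_0,\mathbf{q}_1)=R(\mathbf{q}_0,\mathbf{p}_0)$ since $\sin(\vartheta)\neq 0$, and solve $c^2+\|\mathbf{v}\|^2=1$ with the sign of $c=\mathbf{p}_0^t\,\mathbf{p}$ selecting $S_\pm$. Nothing further is needed.
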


\begin{prop}
\label{prop:shrinking delta}
Let us fix $C>0$, $\delta\in (0,1/6)$ and $\epsilon>\delta$.
%$C,\,\epsilon,\,\delta>0$ with $\delta<1/6$ and $\delta<\epsilon$.
Then there exist constants $D,\,\epsilon_1>0$ such that the following holds.
Suppose that
\begin{enumerate}
 \item $C\,k^{-\delta}<\vartheta<\pi-C\,k^{-\delta}$;
 \item $\mathbf{q}_j+i\,\mathbf{p}_j\in X_{\sqrt{2}}$ for $j=0,1$;
 \item $\mathbf{q}_1+i\,\mathbf{p}_1=e^{-i\vartheta}\,(\mathbf{q}_0+i\,\mathbf{p}_0)$;
\item $\mathbf{v}_j\in \mathbf{q}_0^\perp\cap \mathbf{q}_1^\perp$ for $j=0,1$;
\item $1\ge \max\big\{\|\mathbf{v}_0\|,\,\|\mathbf{v}_1\|\big\}\ge C\,k^{\epsilon-1/2}$;
\item $\mathbf{p}'_j=\big(1+S_j(\mathbf{v}_j)\big)\,\mathbf{p}_j+\mathbf{v}_j\in S^{n-1}(\mathbf{q}_j^\perp)$ for $j=0,1$,
where $S_j$ can be either one of $S_{\pm}$ (Lemma \ref{lem:Spm}).
\end{enumerate}

Then 
 $$
 \mathrm{dist}_{F_n}\big([\mathbf{q}_0+i\,\mathbf{p}_0'],\,[\mathbf{q}_1+i\,\mathbf{p}'_1]\big)\ge D\,k^{\epsilon_1-1/2}
 $$
for every $k\gg 0$.
\end{prop}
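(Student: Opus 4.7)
The plan is to reduce to a Fubini--Study distance computation in $\mathbb{P}^n$. Since $F_n\subset\mathbb{P}^n$ is an isometrically embedded complex submanifold, $\mathrm{dist}_{F_n}\geq \mathrm{dist}_{\mathbb{P}^n}$; and since $\|\mathbf{z}_j'\|^2 = 2$ by hypothesis (6), the Fubini--Study distance satisfies $4\sin^2(\mathrm{dist}_{\mathbb{P}^n}) = 4 - |\langle\mathbf{z}_0',\mathbf{z}_1'\rangle|^2$. The task thus becomes one of bounding $4 - |\langle\mathbf{z}_0',\mathbf{z}_1'\rangle|^2$ from below, where we set $\mathbf{z}_j' := \mathbf{q}_j + i\mathbf{p}_j'$.

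The first step is an explicit calculation. Writing $\mathbf{p}_j' = s_j\mathbf{p}_j + \mathbf{v}_j$ with $s_j := 1 + S_j(\mathbf{v}_j) = \pm\sqrt{1-\|\mathbf{v}_j\|^2}$, and exploiting that (i) $\mathbf{v}_j\in\mathbf{q}_0^\perp\cap\mathbf{q}_1^\perp$ by hypothesis (4); (ii) $\mathbf{p}_0,\mathbf{p}_1\in\mathrm{span}_{\mathbb{R}}(\mathbf{q}_0,\mathbf{q}_1)$, hence $\mathbf{v}_j^t\mathbf{p}_k = 0$; and (iii) hypothesis (3) yields $\mathbf{q}_0^t\mathbf{q}_1 = \mathbf{p}_0^t\mathbf{p}_1 = \cos\vartheta$ and $\mathbf{p}_0^t\mathbf{q}_1 = -\mathbf{q}_0^t\mathbf{p}_1 = \sin\vartheta$, direct expansion of the Hermitian product gives
\[
\langle\mathbf{z}_0', \mathbf{z}_1'\rangle = \cos\vartheta\,(1+s_0s_1) - i\sin\vartheta\,(s_0+s_1) + \mathbf{v}_0^t\mathbf{v}_1.
\]
The mixed-sign case $s_0 s_1\leq 0$ is disposed of separately: there $1+s_0s_1$ and $s_0+s_1$ both tend to $0$ with the $\mathbf{v}_j$'s, and a straightforward compactness argument shows $|\langle\mathbf{z}_0',\mathbf{z}_1'\rangle|$ is uniformly bounded away from $2$, so the distance is uniformly bounded below by a positive constant, much stronger than the claim.

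The main (same-sign) case requires a delicate algebraic estimate. Setting $V := \|\mathbf{v}_0\|^2 + \|\mathbf{v}_1\|^2$ and $w := \mathbf{v}_0^t\mathbf{v}_1$, the key identity is
\[
V - 2w\cos\vartheta = \|\mathbf{v}_0 - \cos\vartheta\,\mathbf{v}_1\|^2 + \sin^2\vartheta\,\|\mathbf{v}_1\|^2 \;\geq\; \tfrac{1}{2}\sin^2\vartheta\, V,
\]
together with its companion where the roles of $\mathbf{v}_0,\mathbf{v}_1$ are swapped. The plan is to show, by explicit expansion of $|\langle\mathbf{z}_0',\mathbf{z}_1'\rangle|^2 = [\cos\vartheta(1+s_0s_1)+w]^2 + \sin^2\vartheta(s_0+s_1)^2$ and using the auxiliary identity $(1+s_0s_1)^2 - (s_0+s_1)^2 = \|\mathbf{v}_0\|^2\|\mathbf{v}_1\|^2$ to decouple the leading and correction terms, that
\[
4 - |\langle\mathbf{z}_0',\mathbf{z}_1'\rangle|^2 \;\geq\; c_0\,(V - 2w\cos\vartheta)
\]
for an absolute constant $c_0>0$. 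Combined with $\sin\vartheta\geq c_1\,k^{-\delta}$ (from (1)) and $V\geq C^2 k^{2\epsilon-1}$ (from (5)), this gives $4 - |\langle\rangle|^2\gtrsim k^{2(\epsilon-\delta)-1}$, whence $\mathrm{dist}_{F_n}\geq D\,k^{(\epsilon-\delta)-1/2}$, and we may take $\epsilon_1 := \epsilon - \delta$, which is positive by the standing assumption $\epsilon>\delta$.

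The main obstacle is precisely the displayed algebraic inequality: the expansion of $|\langle\mathbf{z}_0',\mathbf{z}_1'\rangle|^2$ produces quadratic-in-$V$ correction terms whose signs depend intricately on $\cos\vartheta$ and on where $w$ sits within its admissible range $[-\|\mathbf{v}_0\|\|\mathbf{v}_1\|,\|\mathbf{v}_0\|\|\mathbf{v}_1\|]$. Particular care is needed in the adversarial regime where $\mathbf{v}_0,\mathbf{v}_1$ are nearly collinear (so $|w|$ is close to its maximum) and $\vartheta$ is simultaneously close to $0$ or $\pi$; this is exactly the regime in which the cutoff $\vartheta\in(Ck^{-\delta},\pi-Ck^{-\delta})$ becomes tight.
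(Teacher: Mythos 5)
Your reduction is sound as far as it goes: minorizing $\mathrm{dist}_{F_n}$ by the ambient Fubini--Study distance and computing $\langle\mathbf{z}_0',\mathbf{z}_1'\rangle=\cos\vartheta\,(1+s_0s_1)+\mathbf{v}_0^t\mathbf{v}_1\pm i\,\sin\vartheta\,(s_0+s_1)$ is correct, and the quantity $4-|\langle\mathbf{z}_0',\mathbf{z}_1'\rangle|^2$ you target is, up to a bounded factor, the same quantity the paper estimates, namely $\min_\gamma\|e^{-i\gamma}\mathbf{z}_0'-\mathbf{z}_1'\|^2=4-2\,|\langle\mathbf{z}_0',\mathbf{z}_1'\rangle|$. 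But the proposal has a genuine gap at its center: the inequality $4-|\langle\mathbf{z}_0',\mathbf{z}_1'\rangle|^2\ge c_0\,(V-2w\cos\vartheta)$ is exactly the content of the Proposition, and you only announce it as ``the plan,'' explicitly conceding in your last paragraph that you have not controlled the sign of the correction terms in the adversarial regime. So what you have is a reduction of the Proposition to an unproved algebraic claim, not a proof. (The claim is in fact true, e.g.\ with $c_0=1/2$: for fixed $\|\mathbf{v}_0\|,\|\mathbf{v}_1\|,\cos\vartheta$ and fixed signs, $4-|\langle\mathbf{z}_0',\mathbf{z}_1'\rangle|^2-c_0(V-2w\cos\vartheta)$ is concave in $w$, so it suffices to check the collinear endpoints $w=\pm\|\mathbf{v}_0\|\,\|\mathbf{v}_1\|$; writing $\|\mathbf{v}_j\|=\sin\phi_j$, $s_j=\cos\phi_j$ there, one finds $|\langle\mathbf{z}_0',\mathbf{z}_1'\rangle|=1+\cos\phi_0\cos\phi_1\pm\cos\vartheta\,\sin\phi_0\sin\phi_1$ and elementary identities give $2-|\langle\mathbf{z}_0',\mathbf{z}_1'\rangle|\ge\tfrac12(V-2w\cos\vartheta)$; none of this, or any substitute for it, appears in your argument.)

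A second, outright incorrect step is the dismissal of the mixed-sign case by compactness. Hypothesis (5) allows $\|\mathbf{v}_j\|$ all the way up to $1$ and $\vartheta$ to approach $0$ or $\pi$ at rate $k^{-\delta}$, and in that corner the mixed-sign configurations are not uniformly separated: take $s_0>0>s_1$ with $\|\mathbf{v}_0\|=\|\mathbf{v}_1\|=1-o(1)$, $\mathbf{v}_0=\mathbf{v}_1$, and $\vartheta\to0$; then both $\mathbf{q}_j+i\,\mathbf{p}_j'$ converge to $\mathbf{q}_0+i\,\mathbf{v}_0$, so $|\langle\mathbf{z}_0',\mathbf{z}_1'\rangle|\to2$ and the distance tends to $0$ within the mixed-sign regime. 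Hence ``$|\langle\mathbf{z}_0',\mathbf{z}_1'\rangle|$ uniformly bounded away from $2$'' is false, and the mixed case needs the same quantitative treatment as the same-sign case (the required bound does persist there, since $2-|\langle\mathbf{z}_0',\mathbf{z}_1'\rangle|\gtrsim(1-|\cos\vartheta|)\,\|\mathbf{v}_0\|\,\|\mathbf{v}_1\|$ plus nonnegative terms, but this must be proved, not waved at). For comparison, the paper's own proof never meets either difficulty: it bounds $\min_\gamma\|e^{-i\gamma}\mathbf{z}_0'-\mathbf{z}_1'\|$ by the component of the difference orthogonal to $\mathrm{span}_{\mathbb{C}}(\mathbf{q}_0,\mathbf{q}_1)$ when $\gamma$ stays at distance $\ge k^{-\delta'}$ from $\{0,\pi,2\pi\}$, and by $|\Re B|\ge\sin\vartheta-|\sin\gamma|$ otherwise, a two-case estimate that is uniform in the choice of $S_\pm$ and in $\|\mathbf{v}_j\|\le1$; adopting that decomposition (or supplying the concavity/endpoint argument sketched above plus the mixed-sign estimate) is what is needed to close your proof.
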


In view of Theorem \ref{thm:rapid decrease}, Proposition \ref{prop:shrinking delta} implies:

\begin{cor}
 \label{cor:rapid decrease}
 Uniformly in the range of Proposition \ref{prop:shrinking delta}, we have
 $$
 \Pi_{\sqrt{2},k}(\mathbf{q}_0+i\,\mathbf{p}_0',\mathbf{q}_1+i\,\mathbf{p}'_1)=O\left(k^{-\infty}\right).
 $$
\end{cor}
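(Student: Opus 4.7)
The plan is to reduce the claim to Theorem \ref{thm:rapid decrease} by combining the scaling identity for the Szeg\"{o} kernels on the various circle bundles $X_r$ with the lower bound on the $F_n$-distance provided by Proposition \ref{prop:shrinking delta}. Since the corollary is stated as an immediate consequence of these two inputs, the only real issue is to make sure the rescaling from $X_{\sqrt{2}}$ to $X_1$ does not spoil the rapid decrease, and that the points on $F_n$ which one must control are indeed those to which Proposition \ref{prop:shrinking delta} applies.

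First I would set $x_j := (\mathbf{q}_j + i\,\mathbf{p}'_j)/\sqrt{2} \in X_1$ for $j=0,1$, so that $\sqrt{2}\,x_j = \mathbf{q}_j + i\,\mathbf{p}'_j$. By equation (\ref{eqn:Szego kernel on Xqrt2 pulled back to X}),
\begin{equation*}
\Pi_{\sqrt{2},k}\bigl(\mathbf{q}_0+i\,\mathbf{p}'_0,\,\mathbf{q}_1+i\,\mathbf{p}'_1\bigr)
=\frac{\sqrt{2}}{2^{n}}\,\Pi_{1,k}(x_0,x_1),
\end{equation*}
so it suffices to show the rapid decrease of $\Pi_{1,k}(x_0,x_1)$. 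Next, because the projections $\pi_{r}:X_r\to F_n$ are all induced by $\mathbf{z}\mapsto[\mathbf{z}]$, the projective class of $x_j$ equals $[\mathbf{q}_j+i\,\mathbf{p}'_j]$, which is exactly the point in $F_n$ whose distance is estimated in the conclusion of Proposition \ref{prop:shrinking delta}.

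Invoking Proposition \ref{prop:shrinking delta} under the listed hypotheses then gives
\begin{equation*}
\mathrm{dist}_{F_n}\bigl(\pi_1(x_0),\pi_1(x_1)\bigr)
=\mathrm{dist}_{F_n}\bigl([\mathbf{q}_0+i\,\mathbf{p}'_0],[\mathbf{q}_1+i\,\mathbf{p}'_1]\bigr)
\ge D\,k^{\epsilon_1-1/2},
\end{equation*}
with $\epsilon_1>0$. This is precisely the hypothesis of Theorem \ref{thm:rapid decrease} (applied with the exponent $\epsilon_1$ in place of $\epsilon$), so $\Pi_{1,k}(x_0,x_1)=O(k^{-\infty})$. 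The scaling identity, whose prefactor $\sqrt{2}/2^n$ is a constant independent of $k$, transports this estimate to $\Pi_{\sqrt{2},k}$ uniformly in the range of Proposition \ref{prop:shrinking delta}.

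There is no real obstacle here: the corollary is an immediate packaging of Theorem \ref{thm:rapid decrease} with Proposition \ref{prop:shrinking delta} via the homogeneity formula (\ref{eqn:Szego kernel on Xqrt2 pulled back to X}). The genuine work lies upstream, in establishing the $F_n$-distance lower bound of Proposition \ref{prop:shrinking delta} uniformly as $\vartheta$ approaches $0$ or $\pi$ at the rate $k^{-\delta}$.
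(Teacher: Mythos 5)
Your argument is correct and is exactly the reasoning the paper leaves implicit when it states the corollary as an immediate consequence ("In view of Theorem \ref{thm:rapid decrease}, Proposition \ref{prop:shrinking delta} implies"): rescale via (\ref{eqn:Szego kernel on Xqrt2 pulled back to X}) to pass to $\Pi_{1,k}$, note the projective classes are unchanged, and apply Theorem \ref{thm:rapid decrease} with the exponent $\epsilon_1$ from Proposition \ref{prop:shrinking delta}. You have merely made explicit the routine bookkeeping (the constant prefactor $\sqrt{2}/2^n$ and the identification of the points in $F_n$) that the paper omits.
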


\begin{proof}
[Proof of Proposition \ref{prop:shrinking delta}]
Let us set for $\gamma\in [-\pi,\pi]$:
\begin{equation}
 \label{eqn:basic difference defn}
\Phi(\gamma,\mathbf{p}_0',\mathbf{p}'_1)=: e^{-i\gamma}\,(\mathbf{q}_0+i\mathbf{p}_0')-(\mathbf{q}_1+i\,\mathbf{p}'_1).
\end{equation}
Let 
$\mathrm{dist}_{F_n}$ be the restriction to $F_n$ of the distance function on $\mathbb{P}^n$. 
Then
\begin{eqnarray}
 \label{eqn:distance as min}
 \lefteqn{\mathrm{dist}_{F_n}\big([\mathbf{q}_0+i\,\mathbf{p}_0'],\,[\mathbf{q}_1+i\,\mathbf{p}'_1]\big)}\\
 &=&\frac{1}{\sqrt{2}}\,
 \min\big\{\|\Phi(\gamma,\mathbf{p}_0',\mathbf{p}'_1)\|\,:\,\gamma\in [0,2\pi]\big\}.\nonumber
\end{eqnarray}
The factor in front is needed because while the Hopf map  
$S^{2n+1}_1\rightarrow \mathbb{P}^n$ is a Riemannian submersion,
the projection $S^{2n+1}_{\sqrt{2}}\rightarrow \mathbb{P}^n$ 
is so only in a conformal sense.

We are reduced to proving that in the given range there exist constants 
$D,\,\epsilon_1>0$ such that for every $k\gg 0$ and $\gamma\in [0,2\pi]$
\begin{equation}
 \label{eqn:basic estimate on Phi}
 \|\Phi(\gamma,\mathbf{p}_0',\mathbf{p}'_1)\|\ge D\,k^{\epsilon_1-1/2}.
 \end{equation}

We have
\begin{eqnarray}
 \label{eqn:basic difference}
\lefteqn{\Phi(\gamma,\mathbf{p}_0',\mathbf{p}'_1)}\nonumber\\
%&=&e^{-i\gamma}\,\Big(\mathbf{q}_0+i\mathbf{p}_0+i\,S_0(\mathbf{v}_0)\,\mathbf{p}_0+i\mathbf{v}_0\Big)-
%\left(\mathbf{q}_1+i\mathbf{p}_1+i\,S_1(\mathbf{v}_1)\,\mathbf{p}_1+i\mathbf{v}_1\right)\nonumber\\
&=&e^{-i\gamma}\,\Big(\mathbf{q}_0+i\mathbf{p}_0+i\,S_0(\mathbf{v}_0)\,\mathbf{p}_0+i\mathbf{v}_0\Big)-
\left(e^{-i\vartheta}\,(\mathbf{q}_0+i\mathbf{p}_0)+i\,S_1(\mathbf{v}_1)\,\mathbf{p}_1+i\mathbf{v}_1\right)\nonumber\\&=&(A\,\mathbf{q}_0+B\,\mathbf{p}_0)+i\,\left[e^{-i\gamma}\,\mathbf{v}_0-\mathbf{v}_1\right],
\end{eqnarray}
where
\begin{eqnarray}
 \label{eqn:defn di A}
A&=:&\left(e^{-i\gamma}-e^{-i\vartheta}\right)+i\,S_1(\mathbf{v}_1)\,\sin(\vartheta)\nonumber\\
&=&\cos(\gamma)-\cos(\vartheta)+i\,\left[-\sin(\gamma)+\sin(\vartheta)\big(1+S_1(\mathbf{v}_1)\big)\right],
\end{eqnarray}
\begin{eqnarray}
 \label{eqn:defn di B}
B&=:&i\,\left(e^{-i\gamma}-e^{-i\vartheta}\right)
+i\,\left(e^{-i\gamma}\,S_0(\mathbf{v}_0)-S_1(\mathbf{v}_1)\,\cos(\vartheta)\right)\nonumber\\
&=&\sin(\gamma)\,\big(1+S_0(\mathbf{v}_0)\big)-\sin(\vartheta)\nonumber\\
&&+i\,\big(\cos(\gamma)\,S_0(\mathbf{v}_0)-S_1(\mathbf{v}_1)\,\cos(\vartheta)+
\cos(\gamma)-\cos(\vartheta)\big).
\end{eqnarray}

Regarding the two summands on the last line of (\ref{eqn:basic difference}), we have
$$
A\,\mathbf{q}_0+B\,\mathbf{p}_0\in R(\mathbf{q}_0,\mathbf{q}_1)_\mathbb{C},\,\,\,\,\,\,\,
i\,\left[e^{-i\gamma}\,\mathbf{v}_0-\mathbf{v}_1\right]\in R(\mathbf{q}_0,\mathbf{q}_1)_\mathbb{C}^{\perp_h},
$$
where $\perp_h$ denotes the Hermitian orthocomplement.
Hence
\begin{eqnarray}
 \label{eqn:lower bound on Phi 1}
 \|\Phi(\gamma,\mathbf{p}_0',\mathbf{p}'_1)\|^2&\ge& \left\|e^{-i\gamma}\,\mathbf{v}_0-\mathbf{v}_1\right\|^2\nonumber\\
&\ge&\big(1-|\cos(\gamma)|\big)\,\left[\|\mathbf{v}_0\|^2+\|\mathbf{v}_1\|^2\right].
\end{eqnarray}

Since $1-|\cos(\gamma)|$ vanishes exactly to second order at $\gamma=0,\,\pi,\,2\pi$, there exists 
$D>0$ such that for $\gamma\in [0,2\,\pi]$ we have
$$
1-|\cos(\gamma)|\ge D^2\,\min\left\{\gamma^2,\,(\gamma-\pi)^2,\,(\gamma-2\pi)^2\right\}.
$$
Given this and (\ref{eqn:lower bound on Phi 1}),
we conclude that, under the present hypothesis,
\begin{eqnarray}
 \label{eqn:lower bound on Phi 2}
 \|\Phi(\gamma,\mathbf{p}_0',\mathbf{p}_1')\|&\ge& D\,\min\left\{\gamma,\,|\gamma-\pi|,2\pi-\gamma\right\}\,
 \max\big\{\|\mathbf{v}\|,\|\mathbf{v}'\|\big\}\nonumber\\
 &\ge& 
 C\,D\,\min\left\{\gamma,\,|\gamma-\pi|,\,2\pi-\gamma\right\}\,k^{\epsilon-1/2}.
\end{eqnarray}
Let us now pick $\delta'$ with $\epsilon>\delta'>\delta$, and assume 
\begin{equation}
 \label{eqn:first bound on gamma}
 \min\left\{\gamma,\,|\gamma-\pi|,\,2\pi-\gamma\right\}\ge k^{-\delta'}.
\end{equation}

Then
\begin{eqnarray}
 \label{eqn:lower bound on Phi 3}
 \|\Phi(\gamma,\mathbf{p},\mathbf{p}')\|\ge
 C\,D\,k^{(\epsilon-\delta')-1/2}.
\end{eqnarray}
This establishes (\ref{eqn:basic estimate on Phi}) with $\epsilon_1=\epsilon-\delta'$,
in the case where (\ref{eqn:first bound on gamma})
holds. Thus we are reduced to assuming 
\begin{equation}
 \label{eqn:second bound on gamma}
 \min\left\{\gamma,\,|\gamma-\pi|,\,2\pi-\gamma\right\}\le k^{-\delta'}.
\end{equation}
Then we also have 
$|\sin(\gamma)|\le k^{-\delta'}$. 
Let us then look at the first summand on the last line of (\ref{eqn:basic difference}). We
have an Hermitian orthogonal direct sum 
$$
R(\mathbf{q}_0,\mathbf{q}_1)_\mathbb{C}=R(\mathbf{q}_0,\mathbf{p}_0)_\mathbb{C}=\mathrm{span}_{\mathbb{C}}(\mathbf{q}_0)
\oplus \mathrm{span}_{\mathbb{C}}(\mathbf{p}_0).
$$
On the other hand, since $\sin(\vartheta)$ vanishes exactly to first order 
at $\vartheta=0$ and $\vartheta=\pi$, there exists $E>0$ such that for
$\vartheta\in (0,\pi)$ under the assumptions of the Lemma we have 
$$
\sin(\vartheta)\ge E\,\min\big\{\vartheta,\,\pi-\vartheta\}\ge E\,C\,k^{-\delta}
$$
Hence, in view of (\ref{eqn:defn di B}), we have for some $D_1>0$ and $k\gg 0$
\begin{eqnarray}
 \label{eqn:bound AB}
 \|\Phi(\gamma,\mathbf{p},\mathbf{p}')\|&\ge& |A\,\mathbf{q}_0+B\,\mathbf{p}_0|\ge |B|\ge |\Re (B)|\nonumber\\
 &=&\left|\sin(\gamma)\,\big(1+S_0(\mathbf{v}_0)\big)-\sin(\vartheta)\right|
 \ge\left|\sin(\vartheta)\right|-k^{-\delta'} \nonumber\\
 &\ge&E\,C\,k^{-\delta}-k^{-\delta'}\ge \frac{1}{2}\, E\,C\,k^{-\delta}\ge \frac{1}{2}\, E\,C\,k^{-1/6},
\end{eqnarray}
since $\delta'>\delta$ and $\delta<1/6$. 
This establishes (\ref{eqn:basic estimate on Phi}) with $\epsilon_1=1/3$
when (\ref{eqn:second bound on gamma}) holds.

The proof of Proposition \ref{prop:shrinking delta} is complete.
 \end{proof}

Equations (\ref{eqn:p in termini di v +}) and
(\ref{eqn:p in termini di v -}) parametrize neighborhoods of $\mathbf{p}_0$ and $-\mathbf{p}_0$, respectively.
Therefore, Proposition \ref{prop:shrinking delta} implies that in (\ref{eqn:push forward explicit})
only a negligible contribution to the asymptotics is
lost, if integration in $\mathbf{p}$ and $\mathbf{p}'$ is restricted to shrinking neighborhoods of $\pm \mathbf{p}_0$
and $\pm \mathbf{p}_1$, of radii $O\left(k^{\epsilon-1/2}\right)$. 

This may be rephrased as follows.
Let $\varrho\in \mathcal{C}^\infty_0\left(\mathbb{R}^{n+1}\right)$ 
be even, supported in a small neighborhood of the origin, and identically equal to one in a smaller neighborhood
of the origin. Then the asymptotics of (\ref{eqn:push forward explicit}) are unchanged,
if the integrand is multiplied by 
\begin{eqnarray}
 \label{eqn:rescaled cut-off}
 \lefteqn{\left[\varrho\left(k^{1/2-\epsilon}\,(\mathbf{p}-\mathbf{p}_0)\right)
 +\varrho\left(k^{1/2-\epsilon}\,(\mathbf{p}+\mathbf{p}_0)\right)\right]}\\
&&\cdot \left[\varrho\left(k^{1/2-\epsilon}\,(\mathbf{p}'-\mathbf{p}_1)\right)
+\varrho\left(k^{1/2-\epsilon}\,(\mathbf{p}'+\mathbf{p}_1)\right)\right].\nonumber
\end{eqnarray}

In this way the integrand splits into four summands. In fact, only two of these
are non-negligible for $k\rightarrow+\infty$. Namely, consider the summand containing the factor
\begin{equation}
 \label{eqn:negligible factor 1}
 \varrho\left(k^{1/2-\epsilon}\,(\mathbf{p}-\mathbf{p}_0)\right)\,\varrho\left(k^{1/2-\epsilon}\,(\mathbf{p}'+\mathbf{p}_1)\right).
\end{equation}

On its support, $\mathbf{p}$ lies in a shrinking neighborhood of $\mathbf{p}_0$, and $\mathbf{p}'$
in a shrinking neighborhood of $-\mathbf{p}_1$. Therefore, on the same support
$\mathbf{q}_0+i\,\mathbf{p}$ lies in a shrinking neighborhood of $\mathbf{q}_0+i\,\mathbf{p}_0$, and
$\mathbf{q}_1-i\,\mathbf{p}'$ lies in a shrinking neighborhood of $\mathbf{q}_1-i\,\mathbf{p}_1$.
Since
\begin{eqnarray*}
\frac{1}{\sqrt{2}}\,(\mathbf{q}_0+i\,\mathbf{p}_0)\wedge \frac{1}{\sqrt{2}}\,(\mathbf{q}_1-i\,\mathbf{p}_1)&=&
\frac{1}{2}\,(\mathbf{q}_0+i\,\mathbf{p}_0)\wedge e^{i\vartheta}\,(\mathbf{q}_0-i\,\mathbf{p}_0)\\
&=&-i\,\mathbf{q}_0\wedge\mathbf{p}_0
\end{eqnarray*}
has unit norm, on the support of (\ref{eqn:negligible factor 1})
$[\mathbf{q}_0+i\,\mathbf{p}]$ and $[\mathbf{q}_1+i\,\mathbf{p}']$ remain at a distance $\ge 2/3$, say, in projective
space.  
This implies that as $k\rightarrow +\infty$
$$
\Pi_{\sqrt{2}, k}(\mathbf{q}_0+i\,\mathbf{p},\mathbf{q}_1+i\,\mathbf{p}')=O\left(k^{-\infty}\right)
$$
uniformly in $(\mathbf{p},\,\mathbf{p}')$ in the support of (\ref{eqn:negligible factor 1}).
A similar argument applies to the summand containing the factor
\begin{equation}
 \label{eqn:negligible factor 2}
 \varrho\left(k^{1/2-\epsilon}\,(\mathbf{p}+\mathbf{p}_0)\right)\,\varrho\left(k^{1/2-\epsilon}\,(\mathbf{p}'-\mathbf{p}_1)\right).
\end{equation}

Thus we may rewrite (\ref{eqn:push forward explicit}) as follows:
\begin{equation}
 \label{eqn:push forward split}
 \mathcal{P}_{k,n}(\mathbf{q}_0,\mathbf{q}_1)\sim \mathcal{P}_{k,n}(\mathbf{q}_0,\mathbf{q}_1)_{+}+
 \mathcal{P}_{k,n}(\mathbf{q}_0,\mathbf{q}_1)_{-},
\end{equation}
where
\begin{eqnarray}
 \label{eqn:push forward explicit k+}
 %\lefteqn{
 \mathcal{P}_{k,n}(\mathbf{q}_0,\mathbf{q}_1)_{\pm}%}\\
 &=:&\frac{1}{C_{k,n}^2}\,\int_{S(\mathbf{q}_0^\perp)}\,\int _{S({\mathbf{q}_1}^\perp)}\,
 \varrho\left(k^{1/2-\epsilon}\,(\mathbf{p}\mp\mathbf{p}_0)\right)\,\varrho\left(k^{1/2-\epsilon}\,(\mathbf{p}'\mp\mathbf{p}_1)\right)
 \nonumber\\
 &&\cdot 
 \Pi_{\sqrt{2},k}(\mathbf{q}_0+i\,\mathbf{p},\mathbf{q}_1+i\,\mathbf{p}')\,\mathrm{d}V_{S(\mathbf{q}_0^\perp)}(\mathbf{p})\,
 \mathrm{d}V_{S({\mathbf{q}_1}^\perp)}(\mathbf{p}').
\end{eqnarray}

As a further reduction, we need only deal with one of $\mathcal{P}_{k,n}(\mathbf{q}_0,\mathbf{q}_1)_{\pm}$.

\begin{lem}
 \label{lem:pm are conjugate}
 $\mathcal{P}_{k,n}(\mathbf{q}_0,\mathbf{q}_1)_{\pm}=\overline{\mathcal{P}_k(\mathbf{q}_0,\mathbf{q}_1)_{\mp}}$.
\end{lem}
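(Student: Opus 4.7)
The plan is to derive the identity by combining the conjugation symmetry (\ref{eqn:self conjugate szego kernel}) of the Szeg\"o kernel with the antipodal symmetry $\mathbf{p}\mapsto -\mathbf{p}$ on the fibers $S(\mathbf{q}_j^\perp)$, exploiting that the cut-off $\varrho$ was chosen to be even.

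First I would complex-conjugate the defining integral (\ref{eqn:push forward explicit k+}) for $\mathcal{P}_{k,n}(\mathbf{q}_0,\mathbf{q}_1)_{\pm}$. The constants $1/C_{k,n}^2$ and the values $\varrho\bigl(k^{1/2-\epsilon}(\mathbf{p}\mp\mathbf{p}_0)\bigr)$, $\varrho\bigl(k^{1/2-\epsilon}(\mathbf{p}'\mp\mathbf{p}_1)\bigr)$ are all real, so the conjugation passes onto the Szeg\"o kernel alone. By (\ref{eqn:self conjugate szego kernel}), applied with $\mathbf{z}_0=\mathbf{q}_0+i\mathbf{p}$ and $\mathbf{z}_1=\mathbf{q}_1+i\mathbf{p}'$, we may rewrite
$$
\overline{\Pi_{\sqrt{2},k}(\mathbf{q}_0+i\mathbf{p},\mathbf{q}_1+i\mathbf{p}')}
=\Pi_{\sqrt{2},k}(\mathbf{q}_0-i\mathbf{p},\mathbf{q}_1-i\mathbf{p}').
$$

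Next I would perform the change of variables $\mathbf{p}\rightsquigarrow -\mathbf{p}$ on $S(\mathbf{q}_0^\perp)$ and $\mathbf{p}'\rightsquigarrow -\mathbf{p}'$ on $S(\mathbf{q}_1^\perp)$. The antipodal map is an isometry of each sphere, so it preserves the Riemannian densities $\mathrm{d}V_{S(\mathbf{q}_j^\perp)}$, and it turns $\mathbf{q}_j-i\mathbf{p}$ back into $\mathbf{q}_j+i\mathbf{p}$. Because $\varrho$ was chosen to be even, we have $\varrho\bigl(k^{1/2-\epsilon}(-\mathbf{p}\mp\mathbf{p}_0)\bigr)=\varrho\bigl(k^{1/2-\epsilon}(\mathbf{p}\pm\mathbf{p}_0)\bigr)$, so the cut-offs for $\pm$ switch to those for $\mp$. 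Matching the result with (\ref{eqn:push forward explicit k+}) yields precisely $\mathcal{P}_{k,n}(\mathbf{q}_0,\mathbf{q}_1)_{\mp}$.

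There is essentially no obstacle here: the statement is a bookkeeping consequence of two symmetries already established, namely the real-analytic self-conjugacy of $\Pi_{\sqrt{2},k}$ and the $\mathbb{Z}/2$-symmetry of the construction under the fiberwise antipodal map. The only point that requires care is verifying that the real factor $1/C_{k,n}^2$ and the evenness of $\varrho$ together are enough to let the conjugation and the sign-flip of $\mathbf{p},\mathbf{p}'$ act without producing extra phases; both are immediate from the setup.
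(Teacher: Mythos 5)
Your argument is correct and is essentially the paper's own proof: both rest on the conjugation identity (\ref{eqn:self conjugate szego kernel}) combined with the antipodal substitution $\mathbf{p}\mapsto-\mathbf{p}$, $\mathbf{p}'\mapsto-\mathbf{p}'$ and the evenness of $\varrho$ (plus the invariance of the spherical densities and the positivity of $C_{k,n}$). The only difference is the order of operations --- you conjugate the integral first and then change variables, while the paper changes variables in $\mathcal{P}_{k,n}(\mathbf{q}_0,\mathbf{q}_1)_-$ and then recognizes the conjugated kernel --- which is immaterial.
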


\begin{proof}
[Proof of Lemma \ref{lem:pm are conjugate}].
Let us apply the change of integration variable $\mathbf{p}\mapsto -\mathbf{p}$ and $\mathbf{p}'\mapsto -\mathbf{p}'$,
and apply (\ref{eqn:self conjugate szego kernel}). Since $\varrho$ is even, we get
\begin{eqnarray*}
 \mathcal{P}_{k,n}(\mathbf{q}_0,\mathbf{q}_1)_{-}
 &=&\frac{1}{C_{k,n}^2}\,\int_{S(\mathbf{q}_0^\perp)}\,\int _{S({\mathbf{q}_1}^\perp)}\,
 \varrho\left(k^{1/2-\epsilon}\,(\mathbf{p}-\mathbf{p}_0)\right)\,\varrho\left(k^{1/2-\epsilon}\,(\mathbf{p}'-\mathbf{p}_1)\right)
 \nonumber\\
 &&\cdot 
 \overline{\Pi_{\sqrt{2},k}(\mathbf{q}_0+i\,\mathbf{p},\mathbf{q}_1+i\,\mathbf{p}')}\,\mathrm{d}V_{S(\mathbf{q}_0^\perp)}(\mathbf{p})\,
 \mathrm{d}V_{S({\mathbf{q}_1}^\perp)}(\mathbf{p}')\\
 &=&\overline{\mathcal{P}_k(\mathbf{q}_0,\mathbf{q}_1)_{+}}.
\end{eqnarray*}

\end{proof}

Lemma \ref{lem:pm are conjugate} and (\ref{eqn:push forward split}) imply
\begin{equation}
 \label{eqn:push forward real part}
 \mathcal{P}_{k,n}(\mathbf{q}_0,\mathbf{q}_1)\sim 2\,\Re\big(\mathcal{P}_k(\mathbf{q}_0,\mathbf{q}_1)_{+}\big).
\end{equation}

In the definition of $\mathcal{P}_k(\mathbf{q}_0,\mathbf{q}_1)_{+}$, integration is over a shrinking neighborhood
of $(\mathbf{p}_0,\mathbf{p}_1)\in S(\mathbf{q}_0^\perp)\times S({\mathbf{q}_1}^\perp)$. We can thus make use of 
the parametrization (\ref{eqn:p in termini di v +}), and write in (\ref{eqn:push forward explicit k+}):
%$$
%\mathbf{p}=\mathbf{p}(\mathbf{v}_0)=:\big(1+S_+(\mathbf{v}_0)\big)\,\mathbf{p}_0+\mathbf{v}_0,\,\,\,\,\,\,
%\mathbf{p}'=\mathbf{p}'(\mathbf{v}_1)=:\big(1+S_+(\mathbf{v}_1)\big)\,\mathbf{p}_1+\mathbf{v}_1.
%$$
$$
\mathbf{p}=
%\mathbf{p}(\mathbf{v}_0)=:
\mathbf{p}_0+A(\mathbf{v}_0),\,\,\,\,\,\,
\mathbf{p}'
%=\mathbf{p}'(\mathbf{v}_1)
=\mathbf{p}_1+A(\mathbf{v}_1),
$$
where we have set
$$
A(\mathbf{v}_j)=:\mathbf{v}_j+S_+(\mathbf{v}_j)\,\mathbf{p}_j.
$$
It is also harmless to replace $\mathbf{p}-\mathbf{p}_j$ by $\mathbf{v}_j$ in the rescaled cut-offs
in (\ref{eqn:push forward explicit k+}). Let us also set $\mathbf{z}_j=\mathbf{q}_j+i\,\mathbf{p}_j$, and recall that 
$\mathbf{z}_1=e^{-i\vartheta}\,\mathbf{z}_0$. We then obtain
\begin{eqnarray}
 \label{eqn:push forward explicit k+1}
 %\lefteqn{
 \mathcal{P}_{k,n}(\mathbf{q}_0,\mathbf{q}_1)_{+}%}\\
 &=&\frac{1}{C_{k,n}^2}\,\int_{\mathbf{q}_0^\perp\cap \mathbf{q}_1^\perp}\,
 \int _{\mathbf{q}_0^\perp\cap \mathbf{q}_1^\perp}\,
 \varrho\left(k^{1/2-\epsilon}\,\mathbf{v}_0\right)\,\varrho\left(k^{1/2-\epsilon}\,\mathbf{v}_1\right)\\
 &&\cdot 
 \Pi_{\sqrt{2},k}\Big(\mathbf{z}_0+i\,A(\mathbf{v}_0),\mathbf{z}_1+i\,A(\mathbf{v}_1)\Big)\cdot \mathcal{V}(\mathbf{v}_0,\mathbf{v}_1)\,\mathrm{d}\mathbf{v}_0\,
 \mathrm{d}\mathbf{v}_1,\nonumber
\end{eqnarray}
where $\mathcal{V}(\mathbf{0},\mathbf{0})=1$.

Let us pass to rescaled integration variables $\mathbf{v}_j\mapsto \mathbf{v}_j/\sqrt{k}$ in (\ref{eqn:push forward explicit k+1}). 
Then 
\begin{eqnarray}
 \label{eqn:push forward explicit k+rescaled}
 %\lefteqn{
 \mathcal{P}_{k,n}(\mathbf{q}_0,\mathbf{q}_1)_{+}%}\\
 &=&\frac{k^{1-n}}{C_{k,n}^2}\,\int_{\mathbf{q}_0^\perp\cap \mathbf{q}_1^\perp}\,
 \int _{\mathbf{q}_0^\perp\cap \mathbf{q}_1^\perp}\,
 \varrho\left(k^{-\epsilon}\,\mathbf{v}_0\right)\,\varrho\left(k^{-\epsilon}\,\mathbf{v}_1\right)\nonumber
\\
 &&\cdot 
 \Pi_{\sqrt{2},k}\left(\mathbf{z}_0+\frac{i}{\sqrt{k}}\,A_{0k}(\mathbf{v}_0),
 \mathbf{z}_1+\frac{i}{\sqrt{k}}\,A_{1k}(\mathbf{v}_1)\right)\nonumber\\
&&\cdot \mathcal{V}\left(\dfrac{\mathbf{v}_0}{\sqrt{k}},\frac{\mathbf{v}_1}{\sqrt{k}}\right)\,\mathrm{d}\mathbf{v}_0\,
 \mathrm{d}\mathbf{v}_1, 
\end{eqnarray}
with 
\begin{eqnarray}
\label{eqn:defn of Ajk}
 A_{jk}(\mathbf{v}):=\mathbf{v}+\sqrt{k}\cdot S_+\left(\frac{\mathbf{v}}{\sqrt{k}}\right)\,\mathbf{p}_j.
\end{eqnarray}

Let us consider the Szeg\"{o} term in the integrand. In view of (\ref{eqn:Szego kernel on Xqrt2 pulled back to X}),
this is
\begin{eqnarray}
 \label{eqn:szego term integrand}
\lefteqn{\Pi_{\sqrt{2},k}\left(\mathbf{z}_0+\frac{i}{\sqrt{k}}\,\,A_{0k}(\mathbf{v}_0),\mathbf{z}_1
+\frac{i}{\sqrt{k}}\,\,A_{1k}(\mathbf{v}_1)\right)}\\
&=&\frac{\sqrt{2}}{2^n}\,e^{ik\vartheta}\,\Pi_{1,k}\left(\frac{\mathbf{z}_0}{\sqrt{2}}+\frac{1}{\sqrt{k}}\,
\frac{i\,\,A_{0k}(\mathbf{v}_0)}{\sqrt{2}},
\frac{\mathbf{z}_0}{\sqrt{2}}+\frac{1}{\sqrt{k}}\,\frac{i\,e^{i\vartheta}\,\,A_{1k}(\mathbf{v}_1)}{\sqrt{2}}\right)\nonumber
\end{eqnarray}

Now the sums in the previous expression are just algebraic sums in $\mathbb{C}^{n+1}$; 
in order to apply the scaling asymptotics of Theorem \ref{thm:sz expansion szego equiv}, we need to first express 
the argument of (\ref{eqn:szego term integrand}) in terms of local Heisenberg coordinates on $X_1$ centered at $\mathbf{z}_0/\sqrt{2}$.

\begin{lem}
 \label{lem:algebraic sum and HLC}
Suppose 
$\mathbf{z}=\mathbf{q}+i\,\mathbf{p}\in X_{1}$ 
and choose a system of HLC's on $X_{1}$ centered at $\mathbf{z}$.
Then for $\delta\mathbf{p}\sim \mathbf{0}\in \mathbb{R}^{n+1}$ and $e^{i\vartheta}\in S^1$
such that
$\mathbf{z}+i\,e^{i\vartheta}\,\delta\mathbf{p}\in X_{1}$ we have
$$
\mathbf{z}+i\,e^{i\vartheta}\,\delta\mathbf{p}=\mathbf{z}+_{X_{1}}\left(0,i\,e^{i\vartheta}\,\delta\mathbf{p}+
R_2(\theta;\delta\mathbf{p})\right),
$$
for a suitable smooth function $R_2(\theta;\cdot)$ vanishing to second order at the origin (in $\mathbf{v}$).
\end{lem}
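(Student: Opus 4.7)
The plan is to first represent $\mathbf{z}+i\,e^{i\vartheta}\,\delta\mathbf{p}$ in the explicit $S^{2n+1}_1$-HLC of (\ref{eqn:HLC Pn}) centered at $\mathbf{z}$, and then transfer to $X_1$-HLC via the second-order comparison (\ref{eqn:HLC X}). For $\delta\mathbf{p}$ small I would seek $(\theta,\mathbf{w})$ with $\mathbf{w}\in \mathbf{z}^{\perp_{h_0}}$ such that
\begin{equation*}
\mathbf{z}+i\,e^{i\vartheta}\,\delta\mathbf{p}\;=\;\frac{e^{i\theta}}{\sqrt{1+\|\mathbf{w}\|^2}}\,(\mathbf{z}+\mathbf{w}),
\end{equation*}
reading off the scalar $e^{i\theta}/\sqrt{1+\|\mathbf{w}\|^2}$ from the Hermitian projection onto $\mathbf{z}$ and $\mathbf{w}$ itself from the Hermitian projection onto $\mathbf{z}^{\perp_{h_0}}$.

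The substantive step is to combine both defining conditions of $X_1$: the Hermitian sphere condition $\|\mathbf{z}+i\,e^{i\vartheta}\,\delta\mathbf{p}\|^2=1$ and the symmetric Fermat condition $(\mathbf{z}+i\,e^{i\vartheta}\,\delta\mathbf{p})^t(\mathbf{z}+i\,e^{i\vartheta}\,\delta\mathbf{p})=0$. Expanding each and using $\mathbf{z}^t\mathbf{z}=0$ together with $\|\mathbf{z}\|^2=1$, a short calculation yields
\begin{equation*}
\mathbf{q}^t\delta\mathbf{p}=\tfrac{1}{2}\,\sin(\vartheta)\,\|\delta\mathbf{p}\|^2,\qquad \mathbf{p}^t\delta\mathbf{p}=-\tfrac{1}{2}\,\cos(\vartheta)\,\|\delta\mathbf{p}\|^2,
\end{equation*}
and, as a consequence, the \emph{exact} identity that the standard Hermitian inner product of $i\,e^{i\vartheta}\,\delta\mathbf{p}$ with $\mathbf{z}$ equals $-\tfrac{1}{2}\|\delta\mathbf{p}\|^2\in\mathbb{R}$. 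Since this scalar is real, the equation determining $e^{i\theta}/\sqrt{1+\|\mathbf{w}\|^2}$ forces $\theta=0$ for $\|\delta\mathbf{p}\|$ small; the projection formula onto $\mathbf{z}^{\perp_{h_0}}$ then gives $\mathbf{w}=i\,e^{i\vartheta}\,\delta\mathbf{p}+O(\|\delta\mathbf{p}\|^2)$, with remainder smooth in $(\vartheta,\delta\mathbf{p})$.

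To conclude, I would invoke (\ref{eqn:HLC X}): the passage from $X_1$-HLC to $S^{2n+1}_1$-HLC takes the form $\mathbf{v}\mapsto\mathbf{v}+R_2(\mathbf{v})$, the identity plus a second-order correction. By the inverse function theorem this is locally invertible with inverse of the same shape, so the $X_1$-HLC coordinate of $\mathbf{z}+i\,e^{i\vartheta}\,\delta\mathbf{p}$ is $\mathbf{v}=\mathbf{w}+O(\|\mathbf{w}\|^2)=i\,e^{i\vartheta}\,\delta\mathbf{p}+O(\|\delta\mathbf{p}\|^2)$. Lumping every $O(\|\delta\mathbf{p}\|^2)$ term (including the projection onto $T_{[\mathbf{z}]}F_n$) into a single smooth function $R_2(\vartheta;\delta\mathbf{p})$ delivers the stated equality.

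The main obstacle is conceptual rather than computational: one must verify that the angular component $\theta$ of the HLC representation vanishes \emph{exactly}, not just to first order, so that no spurious $S^1$-phase is introduced when passing from the algebraic sum to the HLC expression. This is precisely where the symmetric Fermat condition and the Hermitian sphere condition interact, and it is the infinitesimal counterpart of the global fact (\ref{eqn:same fiber of pi pm}): displacements of the form $i\,e^{i\vartheta}\,\delta\mathbf{p}$ with $\mathbf{z}+i\,e^{i\vartheta}\,\delta\mathbf{p}\in X_1$ preserve the $\pi_1$-fiber through $\mathbf{z}$.
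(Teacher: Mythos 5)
Your proposal is correct and follows essentially the same route as the paper: reduce to the explicit $S^{2n+1}_1$-coordinates (\ref{eqn:HLC Pn}) via (\ref{eqn:HLC X}), use the Fermat cone condition to get $i\,\mathbf{z}^t\,\delta\mathbf{p}=e^{i\vartheta}\,\|\delta\mathbf{p}\|^2/2$, so that the Hermitian component along $\mathbf{z}$ equals the real positive scalar $1-\tfrac{1}{2}\|\delta\mathbf{p}\|^2$ (the paper's $\beta$), forcing $\theta=0$ exactly and giving $\mathbf{w}=i\,e^{i\vartheta}\,\delta\mathbf{p}+O(\|\delta\mathbf{p}\|^2)$. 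Your real and imaginary part identities for $\mathbf{q}^t\delta\mathbf{p}$ and $\mathbf{p}^t\delta\mathbf{p}$ are just a rewriting of the same relation the paper uses, so the two arguments coincide in substance.
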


\begin{proof}
[Proof of Lemma \ref{lem:algebraic sum and HLC}]
In view of (\ref{eqn:HLC X}), it suffices to prove the statement on $S^{2n+1}_1$, working with the HLC's
(\ref{eqn:HLC Pn}). Since $\mathbf{z},\,\mathbf{z}+i\,e^{i\vartheta}\,\delta\mathbf{p}\in \mathcal{C}_n$, we have
\begin{equation}
 \label{eqn:2nd order relation}
 0=\mathbf{z}^t\,\mathbf{z}+2i\,e^{i\vartheta}\,\mathbf{z}^t\,\delta\mathbf{p}-e^{2i\vartheta}\,\delta\mathbf{p}^t\,\delta\mathbf{p}=
2i\,e^{i\vartheta}\,\mathbf{z}^t\,\delta\mathbf{p}-e^{2i\vartheta}\,\|\delta\mathbf{p}\|^2,
\end{equation}
so that $i\,\mathbf{z}^t\,\delta\mathbf{p}=e^{i\vartheta}\,\|\delta\mathbf{p}\|^2/2$.

Let us look for $\beta>0$ and $\mathbf{h}\in \mathbf{z}^{\perp_h}$ (Hermitian orthocomplement) such that
\begin{equation}
 \label{eqn:tentative HLC}
 \mathbf{z}+i\,e^{i\vartheta}\,\delta\mathbf{p}=\beta\,(\mathbf{z}+\mathbf{h}).
\end{equation}
If this is possible at all, then necessarily $\beta=1/\|\mathbf{z}+\mathbf{h}\|$,
as $\left\|\mathbf{z}+i\,e^{i\vartheta}\,\delta\mathbf{p}\right\|=1$.
Then 
\begin{equation}
 \label{eqn:tentative HLC1}
 \mathbf{z}+i\,e^{i\vartheta}\,\delta\mathbf{p}=\mathbf{z}+_{S^{2n+1}_1}(0,\mathbf{h}).
\end{equation}

Assuming that (\ref{eqn:tentative HLC})
may be solved, then, taking the Hermitian product with $\mathbf{z}$ on both sides of (\ref{eqn:tentative HLC}) and using
(\ref{eqn:2nd order relation}) we get 
\begin{equation}
 \label{eqn:tentative HLC2}
\beta =\mathbf{z}^t\,\left (\overline{\mathbf{z}}-i\,e^{-i\vartheta}\,\delta\mathbf{p}\right)=
1-i\,e^{-i\vartheta}\,\mathbf{z}^t\,\delta\mathbf{p}
=1-\frac{1}{2}\,\|\delta\mathbf{p}\|^2>0.
\end{equation}

With this value of $\beta$, let us set
\begin{equation}
 \label{eqn:_defn of h}
 \mathbf{h}=:\frac{1}{\beta}\,( \mathbf{z}+i\,e^{i\theta}\,\delta\mathbf{p})-\mathbf{z},
\end{equation}
so that (\ref{eqn:tentative HLC}) is certainly satisfied. We need to verify that $\mathbf{h}\in \mathbf{z}^{\perp_h}$.
Indeed we have
$$
\mathbf{h}^t\,\overline{\mathbf{z}}=\frac{1}{\beta}\,\left( 1+i\,e^{i\theta}\delta\mathbf{p}^t\,\overline{\mathbf{z}}\right)-1
=\frac{1}{\beta}\,\left( 1-\frac{1}{2}\,\|\delta\mathbf{p}\|^2\right)-1=0.
$$
Since $\mathbf{h}=i\,e^{i\theta}\,\delta\mathbf{p}+R_2(\delta\mathbf{p})$, the proof of the Lemma is complete.
\end{proof}

%We need to look closer into the structure of $\mathbf{h}$ as a function of $\mathbf{v}$.
Notice that $\mathbf{h}$ is given for $\delta\mathbf{p}\sim \mathbf{0}$ by an asymptotic expansion 
in homogeneous polynomials of increasing degree in $\delta\mathbf{p}$ of the form
\begin{equation}
 \label{eqn:h as asymptotic expansion}
 \mathbf{h}\sim i\,e^{i\theta}\,\delta\mathbf{p}+
 \frac{1}{2}\,\|\delta\mathbf{p}\|^2\,\mathbf{z}+\frac{i}{2}\,e^{i\theta}\,\|\delta\mathbf{p}\|^2\,\delta\mathbf{p}
 +\frac{1}{4}\,\|\delta\mathbf{p}\|^4\,\mathbf{z}+\cdots
\end{equation}
This holds on $S^{2n+1}_1$, but a similar expansion obviously holds on $X_1$, possibly with modified terms
in higher degree.

Let us apply Lemma \ref{lem:algebraic sum and HLC} with $\mathbf{z}=\mathbf{z}_0/\sqrt{2}$
and $\delta\mathbf{p}_j=e^{i\theta}\,\big(A_{jk}(\mathbf{v}_j)/\sqrt{2}\big)/\sqrt{k}$
(we'll set $\theta=0$ for $j=0$ and $\theta=\vartheta$ for $j=1$).
To this end, let us note that in view of
(\ref{eqn:defn of Ajk})
for $k\rightarrow +\infty$ there is an asymptotic expansion of the form
\begin{eqnarray}
 \label{eqn:asymtp expansion for Ak}
A_{jk}(\mathbf{v})\sim \sum_{j\ge 0}\frac{1}{k^{l/2}}\,P_{j,l+1}(\mathbf{v}), 
\end{eqnarray}
where $P_{j,l}$ is a homogeneous (vector valued) polynomial function of degree $l$, and 
$P_{j1}(\mathbf{v})=\mathbf{v}$. Hence 
\begin{equation}
 \label{eqn:Ajl}
 \delta\mathbf{p}_j=\frac{e^{i\theta}}{\sqrt{2k}}\,A_{jk}(\mathbf{v}_j)
 \sim \frac{e^{i\theta}}{\sqrt{2}}\,\sum_{j\ge 0}\frac{1}{k^{(l+1)/2}}\,P_{j,l+1}(\mathbf{v}_j)=
 \frac{e^{i\theta}}{\sqrt{k}}\,\frac{\mathbf{v}_j}{\sqrt{2}}+\cdots
\end{equation}
Making use of (\ref{eqn:Ajl}) in  (\ref{eqn:h as asymptotic expansion}) we obtain
\begin{equation}
 \label{eqn:hkj}
 \mathbf{h}_{kj}\sim \sum_{l\ge 1}\frac{1}{k^{l/2}}\,Q_{jl}(\theta;\mathbf{v})\\
 =\frac{1}{\sqrt{k}}\,\left(i\,e^{i\theta}\,\frac{\mathbf{v}_j}{\sqrt{2}}
 +\sum_{l\ge 1}\frac{1}{k^{l/2}}\,Q_{j,l+1}(\theta;\mathbf{v}_j)\right)
\end{equation}
where $Q_{jl}(\theta;\cdot)$ is a homogeneous polynomial function of degree $l$,
and we have emphasized the dependence on $k$.

Thus we obtain for $j=0$ (with $\theta=0$) that
\begin{equation}
 \label{eqn:da traslato a HLC}
 \frac{\mathbf{z}_0}{\sqrt{2}}+\frac{1}{\sqrt{k}}\,
\frac{i\,\,A_{0k}(\mathbf{v}_0)}{\sqrt{2}}=
\frac{\mathbf{z}_0}{\sqrt{2}}+_{X_1}(0,\mathbf{h}_{k0}),
\end{equation}
where
\begin{equation}
 \label{eqn:espansione per h}
 \mathbf{h}_{k0}\sim \frac{1}{\sqrt{k}}\,\left(i\,\frac{\mathbf{v}_0}{\sqrt{2}}
 +\sum_{l\ge 1}\frac{1}{k^{l/2}}\,Q_{0,l+1}(0;\mathbf{v}_0)\right)=\frac{1}{\sqrt{k}}\,\mathbf{a}_{k0},
\end{equation}
with $\mathbf{a}_{k0}$ defined by the latter equality.
Similarly, for $j=1$ (with $\theta=\vartheta$) we have
$$
\frac{\mathbf{z}_0}{\sqrt{2}}+\frac{1}{\sqrt{k}}\,\frac{i\,e^{i\vartheta}\,\,A_{1k}(\mathbf{v}_1)}{\sqrt{2}}=
\frac{\mathbf{z}_0}{\sqrt{2}}+_{X_1}(0,\mathbf{h}_{k1}),
$$
where
$$
\mathbf{h}_{k1}\sim \frac{1}{\sqrt{k}}\,\left(i\,e^{i\vartheta}\,\frac{\mathbf{v}_1}{\sqrt{2}}
 +\sum_{l\ge 1}\frac{1}{k^{l/2}}\,Q_{1,l+1}(\vartheta;\mathbf{v}_1)\right)=\frac{1}{\sqrt{k}}\,\mathbf{a}_{k1}.
$$

Let us return to (\ref{eqn:szego term integrand}). In view of Theorem
\ref{thm:sz expansion szego equiv}, we get
\begin{eqnarray}
 \label{eqn:szego term expanded expanded}
 \lefteqn{\Pi_{1,k}\left(\frac{\mathbf{z}_0}{\sqrt{2}}+\frac{1}{\sqrt{k}}\,
\frac{i\,\,A_{0k}(\mathbf{v}_0)}{\sqrt{2}},
\frac{\mathbf{z}_0}{\sqrt{2}}+\frac{1}{\sqrt{k}}\,\frac{i\,e^{i\vartheta}\,\,A_{1k}(\mathbf{v}_1)}{\sqrt{2}}\right)}\\
&=&\Pi_{1,k}\left(\frac{\mathbf{z}_0}{\sqrt{2}}+_{X_1}\frac{1}{\sqrt{k}}\,\mathbf{a}_{k0},
\frac{\mathbf{z}_0}{\sqrt{2}}+_{X_1}\frac{1}{\sqrt{k}}\,\mathbf{a}_{k1}\right)\nonumber\\
&\sim&\left(\frac{k}{\pi}\right)^{n-1}\,e^{\psi_2(\mathbf{a}_{k0},\mathbf{a}_{k1})}\cdot
\left[1+\sum_{b=1}^{+\infty}k^{-b/2}\,P_b(\mathbf{a}_{k0},\mathbf{a}_{k1})\right].\nonumber
\end{eqnarray}

We have
\begin{equation}
 \label{eqn:psi2 espanso}
 \psi_2(\mathbf{a}_{k0},\mathbf{a}_{k1})\sim\frac{1}{2}\,\psi_2\left(\mathbf{v}_0,e^{i\vartheta}\,\mathbf{v}_1\right)+
 \sum_{l\ge 1}\frac{1}{k^{l/2}}\,\widetilde{Q}_{l+2}(\vartheta;\mathbf{v}_0,\mathbf{v}_1),
\end{equation}
where $\widetilde{Q}_{l}(\vartheta;\cdot,\cdot)$ is a homogeneous $\mathbb{C}$-valued polynomial of degree $l$.
For any $r\ge 1$ and $l_1,\ldots,l_r\ge 1$, we have 
\begin{eqnarray*}
 %\lefteqn{
 \prod_{j=1}^r\frac{1}{k^{l_j/2}}\,\widetilde{Q}_{l_j+2}(\vartheta;\mathbf{v}_0,\mathbf{v}_1)%}\\
 &=&\frac{1}{k^{\sum_{j=1}^rl_j/2}}\,\widehat{Q}_{\sum_{j=1}^rl_j+2r}(\vartheta;\mathbf{v}_0,\mathbf{v}_1),
\end{eqnarray*}
where $\widehat{Q}_{\sum_{j=1}^rl_j+2r}(\vartheta;\cdot,\cdot)$ is homogeneous of degree $\sum_{j=1}^rl_j+2r$.
Since $l_j\ge 1$ for every $j$, we have $\sum_{j=1}^rl_j+2r\le 3\,\sum_{j=1}^rl_j$.

One can see from this that 
\begin{equation}
 \label{eqn:szego exponential}
 e^{\psi_2(\mathbf{a}_{k0},\mathbf{a}_{k1})}\sim 
 e^{\frac{1}{2}\,\psi_2\left(\mathbf{v}_0,e^{i\vartheta}\,\mathbf{v}_1\right)}\,
 \left[1+\sum_{l\ge 1}\frac{1}{k^{l/2}}\,B_l(\vartheta;\mathbf{v}_0,\mathbf{v}_1)\right],
\end{equation}
where $B_l(\vartheta;\cdot,\cdot)$ is a polynomial of degree $\le 3l$, and having the same parity as $l$.

Similarly, recalling that $P_b$ has the same parity as $b$ and degree $\le 3b$,  
each summand $k^{-b/2}\,P_b(\mathbf{a}_{k0},\mathbf{a}_{k1})$ in (\ref{eqn:szego term expanded expanded}) gives rise to an 
asymptotic expansion in terms of the form 
\begin{eqnarray*}
 \frac{1}{k^{b/2}}\,\prod_{a=1}^r\frac{1}{k^{l_a/2}}\,R_{l_a+1}(\vartheta;\mathbf{v}_0,\mathbf{v}_1)=
 \frac{1}{k^{(b+\sum_{a=1}^rl_a)/2}}\,\widetilde{R}_{\sum_{a=1}^rl_a+r}(\vartheta;\mathbf{v}_0,\mathbf{v}_1),
\end{eqnarray*}
where $R_l(\vartheta;\cdot,\cdot)$ and $\widetilde{R}_l(\vartheta;\cdot,\cdot)$ are homogeneous
polynomials of the given degree, $r\le 3b$, and $b-r$ is even.
Then $3\,(b+\sum_{a=1}^rl_a)\ge \sum_{a=1}^rl_a+r$, and $(b+\sum_{a=1}^rl_a)-(\sum_{a=1}^rl_a+r)=b-r$
is also even. Hence each summand $k^{-b/2}\,P_b(\mathbf{a}_{k0},\mathbf{a}_{k1})$ ($b\ge 1$) yields an asymptotic expansion 
of the form 
$$
\sum_{l\ge 1}\,k^{-l/2}\,T_{bl}(\vartheta;\mathbf{v}_0,\mathbf{v}_1),
$$
where again each $T_{bl}$ has the same parity as $l$ and degree $\le 3l$.

Putting this all together, we obtain an asymptotic expansion for the integrand in 
(\ref{eqn:push forward explicit k+rescaled}):

\begin{lem}
 \label{lem:asymptotic expansion for Piak}
For $l\ge 0$, there exist polynomials $Z_l(\vartheta;\cdot,\cdot)$ of degree $\le 3l$ and parity $(-1)^l$,
with $Z_0(\vartheta;\cdot,\cdot)=1$,
such that 
 \begin{eqnarray*}
 \lefteqn{\Pi_{\sqrt{2},k}\left(\mathbf{z}_0+\frac{i}{\sqrt{k}}\,A_{0k}(\mathbf{v}_0),
 \mathbf{z}_1+\frac{i}{\sqrt{k}}\,A_{1k}(\mathbf{v}_1)\right)
\cdot \mathcal{V}\left(\dfrac{\mathbf{v}_0}{\sqrt{k}},\frac{\mathbf{v}_1}{\sqrt{k}}\right)}\\
&\sim&\frac{\sqrt{2}}{2^n}\,e^{ik\vartheta}\,\left(\frac{k}{\pi}\right)^{n-1}\,e^{\frac{1}{2}\,
\psi_2\left(\mathbf{v}_0,e^{i\vartheta}\,\mathbf{v}_1\right)}\,
 \sum_{l\ge 0}\frac{1}{k^{l/2}}\,Z_l(\vartheta;\mathbf{v}_0,\mathbf{v}_1).
 \end{eqnarray*}
\end{lem}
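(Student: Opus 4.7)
The plan is to combine three asymptotic ingredients: the reformulation of the Szeg\"o-kernel argument in Heisenberg local coordinates from Lemma \ref{lem:algebraic sum and HLC} and (\ref{eqn:da traslato a HLC}), the scaling asymptotics in HLC's of Theorem \ref{thm:sz expansion szego equiv}, and the Taylor expansion of the Jacobian $\mathcal{V}$ about the origin. Collecting terms by total power of $k^{-1/2}$ will produce the polynomials $Z_l$, and a careful count of degrees and parities will verify the stated bounds.

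Concretely, I would first invoke (\ref{eqn:szego term integrand}) to peel off the factor $\sqrt{2}\,2^{-n}\,e^{ik\vartheta}$ and pass to $\Pi_{1,k}$, then use Lemma \ref{lem:algebraic sum and HLC} together with the expansion (\ref{eqn:espansione per h}) of $\mathbf{a}_{kj}$. On the support of the cut-off one has $\|\mathbf{v}_j\|\lesssim k^\epsilon$ with $\epsilon<1/6$, so $\|\mathbf{a}_{kj}\|\lesssim k^\epsilon$ as well, and Theorem \ref{thm:sz expansion szego equiv} applies, giving the expansion $(k/\pi)^{n-1}\,e^{\psi_2(\mathbf{a}_{k0},\mathbf{a}_{k1})}\,\bigl[1+\sum_{b\ge 1}k^{-b/2}\,P_b(\mathbf{a}_{k0},\mathbf{a}_{k1})\bigr]$. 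The exponent expands as in (\ref{eqn:psi2 espanso}); factoring out $e^{\frac{1}{2}\psi_2(\mathbf{v}_0,e^{i\vartheta}\mathbf{v}_1)}$ and Taylor-expanding the residual exponential yields (\ref{eqn:szego exponential}), whose coefficient at order $k^{-l/2}$ is a polynomial of parity $l$ and degree $\le 3l$. The same substitution inside each $P_b(\mathbf{a}_{k0},\mathbf{a}_{k1})$, combined with the fact that $P_b$ has parity $b$ and degree $\le 3b$, followed by multiplication by the Taylor series of $\mathcal{V}(\mathbf{v}_0/\sqrt{k},\mathbf{v}_1/\sqrt{k})$ about the origin, produces the claimed form of $Z_l$; the identity $Z_0=1$ is immediate since all three ingredients equal $1$ at order $k^0$.

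The main obstacle I anticipate is the combined parity-and-degree bookkeeping. The key arithmetic observations are that $l+2\le 3l$ and $l+2\equiv l\pmod 2$ for $l\ge 1$, so every correction term in the expansion of either the exponent or of $\mathbf{a}_{kj}$ contributes, at order $k^{-l/2}$, a polynomial of parity $l$ and degree $\le 3l$; these bounds are preserved under multiplication of formal series because orders in $k^{-1/2}$ and polynomial degrees add, while parities add modulo $2$. Verifying that these constraints close up under the exponential power series and under substitution into the $P_b$'s is the only delicate point of the argument; once checked, the lemma follows by reindexing the triple product of asymptotic expansions.
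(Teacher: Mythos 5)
Your proposal is correct and follows essentially the same route as the paper: the paper's proof of this lemma simply invokes the arguments you reconstruct, namely (\ref{eqn:szego term integrand}) to pass to $\Pi_{1,k}$, Lemma \ref{lem:algebraic sum and HLC} with the expansions of $\mathbf{a}_{k0},\mathbf{a}_{k1}$, the scaling asymptotics of Theorem \ref{thm:sz expansion szego equiv}, the factorization (\ref{eqn:szego exponential}) of the exponential, and finally multiplication by the Taylor expansion of $\mathcal{V}$. The only bookkeeping nuance worth noting is that the corrections to $\mathbf{a}_{kj}$ at order $k^{-l/2}$ are homogeneous of degree $l+1$ (parity $l+1$, not $l$), so for the $P_b$-terms the stated degree and parity constraints are recovered only after combining with the prefactor $k^{-b/2}$ and using that $P_b$ has degree $\le 3b$ and parity $b$ — exactly the count carried out in the paper.
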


\begin{proof}
[Proof of Lemma \ref{lem:asymptotic expansion for Piak}]
The previous arguments yield an asymptotic expansion of the given form for the first factor. 
We need only multiply the
latter expansion by the Taylor expansion of the second factor. 
\end{proof}

Since integration in (\ref{eqn:push forward explicit k+rescaled}) takes place over a poly-ball or radius
$O\left(k^{\epsilon}\right)$ in $\left(\mathbf{q}_0^\perp\cap \mathbf{q}_1^\perp\right)^2$, the expansion may be integrated
term by term. In addition, given that the exponent and the cut-offs are even functions of 
$(\mathbf{v}_0,\mathbf{v}_1)$, only terms of even parity yield a non-zero integral. Hence we may discard the half-integer powers
and obtain
\begin{eqnarray}
 \label{eqn:push forward explicit k+rescaled expanded}
 %\lefteqn{
 \mathcal{P}_k(\mathbf{q}_0,\mathbf{q}_1)_{+}%}\\
 &\sim&\frac{k^{1-n}}{C_{k,n}^2}\,\frac{\sqrt{2}}{2^n}\,e^{ik\vartheta}\,\left(\frac{k}{\pi}\right)^{n-1}\,
 \sum_{l\ge 0}k^{-l}\,\widehat{P}_{l}(\vartheta)_{+},
\end{eqnarray}
where
\begin{eqnarray}
\label{eqn:lth term P_l}
\widehat{P}_{l}(\vartheta)_{+}&=:&\int_{\mathbf{q}_0^\perp\cap \mathbf{q}_1^\perp}\,
 \int _{\mathbf{q}_0^\perp\cap \mathbf{q}_1^\perp}\,
 \varrho\left(k^{-\epsilon}\,\mathbf{v}_0\right)\,\varrho\left(k^{-\epsilon}\,\mathbf{v}_1\right)\nonumber\\
&&\cdot e^{\frac{1}{2}\,
\psi_2\left(\mathbf{v}_0,e^{i\vartheta}\,\mathbf{v}_1\right)}\,
 Z_{2l}(\vartheta;\mathbf{v}_0,\mathbf{v}_1)\,\mathrm{d}\mathbf{v}_0\,
 \mathrm{d}\mathbf{v}_1.
\end{eqnarray}
We can slightly simplify the previous asymptotic expansion, as follows. 
First, as emphasized the dependence on $(\mathbf{q}_0,\mathbf{q}_1)$ is of course only through the angle $\vartheta$.
In particular, in (\ref{eqn:lth term P_l}) nothing is lost by assuming that
$\mathbf{q}_0$ and $\mathbf{q}_1$ span the 2-plane $\{\mathbf{0}\}\times \mathbb{R}^2\subseteq \mathbb{R}^{n+1}$,
and therefore that $\mathbf{q}_0^\perp\cap \mathbf{q}_1^\perp=\mathbb{R}^{n-1}\times \{\mathbf{0}\}$.

Furthermore, given (\ref{eqn:defn di psi2}), we have
\begin{eqnarray}
 \label{eqn:psi2 sviluppato}
\lefteqn{\psi_2\left(\mathbf{v}_0,e^{i\vartheta}\,\mathbf{v}_1\right)}\nonumber\\
&=&-i\,\sin(\vartheta)\,\mathbf{v_0}^t\,\mathbf{v}_1-\frac{1}{2}\,\|\mathbf{v_0}-\cos(\vartheta)\,\mathbf{v}_1\|^2
-\frac{1}{2}\,\sin(\vartheta)^2\,\|\mathbf{v}_1\|^2.
\end{eqnarray}
With the change of variables
$$
\begin{pmatrix}
 \mathbf{v}_0\\
 \mathbf{v_1}
\end{pmatrix}=\sqrt{2}\,
\begin{pmatrix}
 \mathbf{b}_0+\cot(\vartheta)\,\mathbf{b}_1\\
\big(1/\sin(\vartheta)\big)\,\mathbf{b}_1
\end{pmatrix}
$$
we obtain
\begin{eqnarray}
 \label{eqn:psi2 sviluppato nuove variabili}
%\lefteqn{
\psi_2\left(\mathbf{v}_0,e^{i\vartheta}\,\mathbf{v}_1\right)
%}\nonumber\\
%&=&
=-\frac{1}{2}\,\|\mathbf{b}_0\|^2-i\,\mathbf{b}_0^t\,\mathbf{b}_1
-\frac{1}{2}\,\big(1+2i\,\cot(\vartheta)\big)\,\|\mathbf{b_1}\|^2.%\nonumber
\end{eqnarray}
Since $Z_{2l}(\vartheta,\cdot,\cdot)$ is even and has degree $\le 6l$, we can write
$$ 
Z_{2l}\left(\vartheta;\mathbf{b}_0+\cot(\vartheta)\,\mathbf{b}_1,\frac{\mathbf{b}_1}{\sin(\vartheta)}\right)=
\frac{1}{\sin(\vartheta)^{6l}}\,T_{l}(\vartheta;\mathbf{b}_0,\mathbf{b}_1),
$$
where $T_l(\vartheta;\cdot,\cdot)$ is an even polynomial of degree $\le 6l$, with smooth bounded coefficients for $\vartheta\in [0,\pi]$.
Thus
\begin{eqnarray}
 \label{eqn:computation of P0}
\lefteqn{\widehat{P}_{l}(\vartheta)_{+}}\nonumber\\
&=&\left(\frac{2}{\sin(\vartheta)}\right)^{n-1}\,\frac{1}{\sin(\vartheta)^{6l}}\,\int_{\mathbb{R}^{n-1}}\,\int_{\mathbb{R}^{n-1}}\,
e^{-\frac{1}{2}\,\|\mathbf{b}_0\|^2-i\,\mathbf{b}_0^t\,\mathbf{b}_1
-\frac{1}{2}\,\big(1+2i\,\cot(\vartheta)\big)\,\|\mathbf{b_1}\|^2}\nonumber\\
&&\cdot \varrho\left(k^{-\epsilon}\,\sqrt{2}\,\big(\mathbf{b}_0+\cot(\vartheta)\,\mathbf{b}_1\big)\right)\,\varrho\left(k^{-\epsilon}\,\sqrt{2}\,
\sin(\vartheta)^{-1}\,\mathbf{b}_1\right)\,T_{l}(\vartheta;\mathbf{b}_0,\mathbf{b}_1)\nonumber\\
&&\cdot 
\mathrm{d}\mathbf{b}_0\,\mathrm{d}\mathbf{b}_1.
\end{eqnarray}

There is a constant $C>0$ such that the support of 
$$
1-\varrho\left(k^{-\epsilon}\,\sqrt{2}\,\big(\mathbf{b}_0+\cot(\vartheta)\,\mathbf{b}_1\big)\right)\,\varrho\left(k^{-\epsilon}\,\sqrt{2}\,
\sin(\vartheta)^{-1}\,\mathbf{b}_1\right)
$$
is contained in the locus where $\|(\mathbf{b}_0,\mathbf{b}_1)\|\ge C\,k^{\epsilon}\,\sin(\vartheta)$. Under the assumptions of the Theorem, 
this implies, perhaps for a different constant $C>0$, that
$\|(\mathbf{b}_0,\mathbf{b}_1)\|\ge C\,k^{\epsilon-\delta}$. On the other hand, the exponent in (\ref{eqn:computation of P0})
satisfies
\begin{equation*}
 \left|-\frac{1}{2}\,\|\mathbf{b}_0\|^2-i\,\mathbf{b}_0^t\,\mathbf{b}_1
-\frac{1}{2}\,\big(1+2i\,\cot(\vartheta)\big)\,\|\mathbf{b_1}\|^2\right|\le
-\frac{1}{2}\,\left(\|\mathbf{b}_0\|^2+\|\mathbf{b}_1\|^2\right).
\end{equation*}
Given that $\epsilon>\delta$ (statement of Proposition \ref{prop:shrinking delta}), 
we conclude that only a negligible contribution to the asymptotics is lost, if the cut-off function is omitted and integration
is now extended to all of $\mathbb{R}^{n-1}\times \mathbb{R}^{n-1}$.

We can thus rewrite (\ref{eqn:push forward explicit k+rescaled expanded}) as follows:
\begin{eqnarray}
 \label{eqn:push forward explicit k+rescaled expanded1}
 \lefteqn{
 \mathcal{P}_k(\mathbf{q}_0,\mathbf{q}_1)_{+}}\nonumber\\
 &\sim&\frac{1}{C_{k,n}^2}\,\frac{\sqrt{2}}{2}\,e^{ik\vartheta}\,\left(\frac{1}{\pi\,\sin(\vartheta)}\right)^{n-1}\,
 \sum_{l\ge 0}k^{-l}\,\frac{1}{\sin(\vartheta)^{6l}}\,\widetilde{P}_{l}(\vartheta)_{+},
\end{eqnarray}
where
\begin{eqnarray}
 \label{eqn:lth coefficient simple}
\lefteqn{\widetilde{P}_{l}(\vartheta)_{+}}\nonumber\\
&=&\int_{\mathbb{R}^{n-1}}\,\int_{\mathbb{R}^{n-1}}\,
e^{-\frac{1}{2}\,\|\mathbf{b}_0\|^2-i\,\mathbf{b}_0^t\,\mathbf{b}_1
-\frac{1}{2}\,\big(1+2i\,\cot(\vartheta)\big)\,\|\mathbf{b_1}\|^2}\, T_{l}(\vartheta;\mathbf{b}_0,\mathbf{b}_1)\,
\mathrm{d}\mathbf{b}_0\,\mathrm{d}\mathbf{b}_1.\nonumber
\end{eqnarray}

Let us set $B_\vartheta=\big(1+i\,\cot(\vartheta)\big)\,I_{n-1}$.
The leading order coefficient is
\begin{eqnarray}
 \label{eqn:leading order coefficient}
\lefteqn{\widetilde{P}_{0}(\vartheta)_{+}}\nonumber\\
&=&\int_{\mathbb{R}^{n-1}}e^{-\frac{1}{2}\,\big(1+2i\,\cot(\vartheta)\big)\,\|\mathbf{b_1}\|^2}\,\left[\int_{\mathbb{R}^{n-1}}\,
e^{-i\,\mathbf{b}_0^t\,\mathbf{b}_1
-\frac{1}{2}\,\|\mathbf{b}_0\|^2}\,\mathrm{d}\mathbf{b}_0\right]\, 
\mathrm{d}\mathbf{b}_1\nonumber\\
&=&(2\pi)^{(n-1)/2}\,\int_{\mathbb{R}^{n-1}}e^{-\frac{1}{2}\,\big(2+2i\,\cot(\vartheta)\big)\,\|\mathbf{b_1}\|^2}\,
\mathrm{d}\mathbf{b}_1\nonumber\\
&=&2^{(n-1)/2}\,\pi^{n-1}\,\frac{1}{\sqrt{\det(B_\vartheta)}}\nonumber\\
&=&\left(\sqrt{2}\,\pi\right)^{n-1}\,
\sin(\vartheta)^{(n-1)/2}\,e^{i\,\left(\frac{\vartheta}{2}-\frac{\pi}{4}\right)\,(n-1)}.
\end{eqnarray}

Given (\ref{eqn:leading order coefficient}), (\ref{eqn:push forward explicit k+rescaled expanded1}) and
(\ref{eqn:push forward real part}), $\mathcal{P}_k(\mathbf{q}_0,\mathbf{q}_1)$ has an asymptotic
expansion for $k\rightarrow +\infty$ with leading order term
\begin{equation}
 \label{eqn:leading term Pk}
\frac{2^{n/2}}{C_{k,n}^2}\,\frac{1}{\sin(\vartheta)^{(n-1)/2}}\,\cos\left(k\,\vartheta+
\left(\frac{\vartheta}{2}-\frac{\pi}{2}\right)\,(n-1)\right).
\end{equation}

For any $l$, we can write
\begin{eqnarray}
 \label{eqn:lth coefficient simple integrated}
\lefteqn{\widetilde{P}_{l}(\vartheta)_{+}}\nonumber\\
&=&\int_{\mathbb{R}^{n-1}}e^{-\frac{1}{2}\,\big(1+2i\,\cot(\vartheta)\big)\,\|\mathbf{b_1}\|^2}\,\left[\int_{\mathbb{R}^{n-1}}\,
e^{-i\,\mathbf{b}_0^t\,\mathbf{b}_1
-\frac{1}{2}\,\|\mathbf{b}_0\|^2}\, T_{l}(\vartheta;\mathbf{b}_0,\mathbf{b}_1)\,\mathrm{d}\mathbf{b}_0\right]\, 
\mathrm{d}\mathbf{b}_1\nonumber\\
&=&\int_{\mathbb{R}^{n-1}}e^{-\frac{1}{2}\,\big(2+2i\,\cot(\vartheta)\big)\,\|\mathbf{b_1}\|^2}\,
\mathcal{T}_{l}(\vartheta;\mathbf{b}_1)\,\mathrm{d}\mathbf{b}_1,
\end{eqnarray}
where $\mathcal{T}_{l}(\vartheta;\cdot)$ is an even polynomial of degree $\le 6l$.

Let us introduce the 
Fourier transform
\begin{eqnarray}
 \label{eqn:fourier trasnform}
 \mathcal{F}(\mathbf{c})&=& \int_{\mathbb{R}^{n-1}}e^{-\frac{1}{2}\,\big(2+2i\,\cot(\vartheta)\big)\,\|\mathbf{b_1}\|^2-i\,\mathbf{b}_1^t\,\mathbf{c}}\,
 \mathrm{d}\mathbf{b}_1\\
 &=&(2\pi)^{(n-1)/2}\,\sin(\vartheta)^{(n-1)/2}\,e^{i\,\left(\frac{\vartheta}{2}-\frac{\pi}{4}\right)\,(n-1)}\,
 e^{-\frac{1}{2}\,\big(2+2i\,\cot(\vartheta)\big)^{-1}\,\|\mathbf{c}\|^2}.\nonumber
\end{eqnarray}

Then (\ref{eqn:lth coefficient simple integrated}) is the result of applying an 
even differential polynomial $P_l(D_\mathbf{c})$ of degree $\le 6l$ to $ \mathcal{F}(\mathbf{c})$,
and then evaluating the result at $\mathbf{c}=\mathbf{0}$.

Given this and (\ref{eqn:leading term Pk}), we conclude that 
\begin{eqnarray}
 \label{eqn:finale expansion}
 \lefteqn{\mathcal{P}_k(\mathbf{q}_0,\mathbf{q}_1)=\frac{2^{\frac{n}{2}}}{C_{k,n}^2}\,
\left(\frac{1}{\sin(\vartheta)}\right)^{(n-1)/2}\,}\\
 &&\cdot\left[\cos\left(k\vartheta+\left(\frac{\vartheta}{2}-\frac{\pi}{4}\right)(n-1)\right)\cdot A(\vartheta)
+\sin\left(k\vartheta+\left(\frac{\vartheta}{2}-\frac{\pi}{4}\right)(n-1)\right)\cdot B(\vartheta)\right]\nonumber,
\end{eqnarray}
where 
$$
A(\vartheta)\sim 1+\sum_{l=1}^{+\infty}k^{-l}\,\frac{A_l(\vartheta)}{\sin(\vartheta)^{6l}},
\,\,\,\,\,\,B(\vartheta)\sim \sum_{l=1}^{+\infty}k^{-l}\,\frac{B_l(\vartheta)}{\sin(\vartheta)^{6l}},
$$
with $A_l$ and $B_l$ smooth functions of $\vartheta$ on $[0,2\pi]$.

\end{proof}

\section{Proof of Proposition \ref{prop:espansione per Cnk}}

\begin{proof}
 [Proof of Proposition \ref{prop:espansione per Cnk}]
The diagonal restriction $\mathcal{P}_{k,n}(\mathbf{q},\mathbf{q})$
may be computed in two different ways. On the one hand, 
since $\mathcal{P}_{k,n}(\mathbf{q},\mathbf{q})$ is constant we have
\begin{equation}
 \label{eqn:Pk riemann roch}
 \mathcal{P}_{k,n}(\mathbf{q},\mathbf{q})=\frac{N_{k,n}}{\mathrm{vol}(S^n)}=
 \frac{2}{\mathrm{vol}(S^n)}\,\frac{k^{n-1}}{(n-1)!}+O\left(k^{n-2}\right).
\end{equation}

On the other hand, (\ref{eqn:push forward explicit}) with $\mathbf{q}_0=\mathbf{q}_1=\mathbf{q}$ yields
\begin{eqnarray}
 \label{eqn:push forward explicit diagonal}
 \lefteqn{\mathcal{P}_{k,n}(\mathbf{q},\mathbf{q})}\nonumber\\
 &=&\frac{1}{C_{k,n}^2}\,\int_{S(\mathbf{q}^\perp)}\,\int _{S({\mathbf{q}}^\perp)}\,
 \Pi_{\sqrt{2},k}(\mathbf{q}+i\,\mathbf{p},\mathbf{q}+i\,\mathbf{p}')\,\mathrm{d}V_{S(\mathbf{q}^\perp)}(\mathbf{p})\,
 \mathrm{d}V_{S({\mathbf{q}}^\perp)}(\mathbf{p}') \nonumber\\
 &=&\frac{1}{C_{k,n}^2}\,\int_{S(\mathbf{q}^\perp)}\,F_k(\mathbf{q},\mathbf{p})\,\mathrm{d}V_{S(\mathbf{q}^\perp)}(\mathbf{p}),
\end{eqnarray}
where
\begin{equation}
 \label{eqn:Fk}
 F_k(\mathbf{q},\mathbf{p})=:\int _{S({\mathbf{q}}^\perp)}\,
 \Pi_{\sqrt{2},k}(\mathbf{q}+i\,\mathbf{p},\mathbf{q}+i\,\mathbf{p}')\,\mathrm{d}V_{S(\mathbf{q}^\perp)}(\mathbf{p}').
\end{equation}

Again, integration in $\mathrm{d}V_{S(\mathbf{q}^\perp)}(\mathbf{p}')$
localizes in a shrinking neighborhood of $\mathbf{p}$. Hence we may let
$$
\mathbf{p}'=\mathbf{p}+A(\mathbf{v}),\,\,A(\mathbf{v})=\mathbf{v}+S_+(\mathbf{v})\,\mathbf{p},
$$
where $\mathbf{v}\in \mathbf{q}^\perp\cap\mathbf{p}^\perp$, and introduce the cut-off
$\varrho\left(k^{1/2-\epsilon}\,\mathbf{v}\right)$. Passing to rescaled coordinates, and setting 
$\mathbf{z}=\mathbf{q}+i\,\mathbf{p}$,
we get 
\begin{eqnarray}
\label{eqn:Fkespanso}
 \lefteqn{F_k(\mathbf{q},\mathbf{p})}\\
 &=&\frac{1}{k^{(n-1)/2}}\,\int_{\mathbf{q}^\perp\cap\mathbf{p}^\perp}
 \,\varrho\left(k^{-\epsilon}\,\mathbf{v}\right)\,
 \Pi_{\sqrt{2},k}\left(\mathbf{z},\mathbf{z}+\frac{i}{\sqrt{k}}\,A_k(\mathbf{v})\right)\,
\mathcal{V}\left(\frac{\mathbf{v}}{\sqrt{k}}\right) \,\mathrm{d}\mathbf{v}.\nonumber
\end{eqnarray}
where
$$
A_k(\mathbf{v})=\mathbf{v}+\sqrt{k}\,S_+\left(\frac{\mathbf{v}}{\sqrt{k}}\right)\,\mathbf{p},\,\,\,\,\,\,
\mathcal{V}(\mathbf{0})=1.
$$

By Lemma \ref{lem:asymptotic expansion for Piak} (with $\mathbf{z}=\mathbf{z}_0=\mathbf{z}_1$,
$\mathbf{v}_0=\mathbf{0}$, $\mathbf{v}_1=\mathbf{v}$, $\vartheta=0$), we have 
\begin{eqnarray*}
 \lefteqn{\Pi_{\sqrt{2},k}\left(\mathbf{z},
 \mathbf{z}+\frac{i}{\sqrt{k}}\,A_{k}(\mathbf{v})\right)
\cdot \mathcal{V}\left(\frac{\mathbf{v}}{\sqrt{k}}\right)}\\
&\sim&\frac{\sqrt{2}}{2^n}\,\left(\frac{k}{\pi}\right)^{n-1}\,e^{-\frac{1}{4}\,
\|\mathbf{v}\|^2}\,
 \sum_{l\ge 0}\frac{1}{k^{l/2}}\,Z_l(\mathbf{v}),
 \end{eqnarray*}
for certain polynomials $Z_l$ of degree $\le 3l$ and parity $(-1)^l$,
with $Z_0(\cdot)=1$.

As before, the expansion may be integrated term by term and, by parity, only the summands with 
$l$ even yield a non-zero contribution. In addition, only a negligible contribution is lost if
the cut off is omitted and 
integration is extended to all of $\mathbf{q}^\perp\cap\mathbf{p}^\perp\cong \mathbb{R}^{n-1}$.
Therefore 
\begin{eqnarray}
\label{eqn:Fkespanso1}
 F_k(\mathbf{q},\mathbf{p})
 &\sim&\frac{\sqrt{2}}{2^n}\,\frac{k^{(n-1)/2}}{\pi^{n-1}}\,\sum_{l\ge 0}k^{-l}\int_{\mathbb{R}^{n-1}}
 \,e^{-\frac{1}{4}\,
\|\mathbf{v}\|^2}\,Z_{2l}(\mathbf{v})\,\mathrm{d}\mathbf{v}\nonumber\\
&=&\frac{1}{\sqrt{2}}\,\left(\frac{k}{\pi}\right)^{(n-1)/2}+\cdots
\end{eqnarray}

Inserting this in (\ref{eqn:push forward explicit diagonal}), we obtain
an asymptotic expansion 
\begin{equation}
\label{eqn:push forward explicit diagonal1}
 \mathcal{P}_{k,n}(\mathbf{q},\mathbf{q})\sim
 \frac{\mathrm{vol}(S^{n-1})}{C_{k,n}^2}\,\frac{1}{\sqrt{2}}\,\left(\frac{k}{\pi}\right)^{(n-1)/2}+\cdots
 \end{equation}
 
 Comparing (\ref{eqn:Pk riemann roch}) and
 (\ref{eqn:push forward explicit diagonal1}), we obtain an asymptotic expansion in descending powers of $k$,
 of the form
 $$
 C_{k,n}\sim \left[\frac{\mathrm{vol}(S^n)\,\mathrm{vol}(S^{n-1})}{2\,\sqrt{2}}\cdot (n-1)!\right]^{1/2}\,
 (\pi\,k)^{-(n-1)/4}+\cdots$$
\end{proof}

\end{document}